\newtheorem{thm}{Theorem}[section]
\newtheorem{lem}{Lemma}[section]
\newtheorem{ex}{Example}[section]
\newtheorem{cor}{Corollary}[section]
\title{Squared distance matrices of  trees with matrix weights}
\author{Iswar Mahato\thanks{Department of Mathematics, Indian Institute of Technology Kharagpur, Kharagpur 721302, India. Email: iswarmahato02@gmail.com} \and M. Rajesh Kannan\thanks{Department of Mathematics, Indian Institute of Technology Kharagpur, Kharagpur 721302, India. Email: rajeshkannan@maths.iitkgp.ac.in, rajeshkannan1.m@gmail.com }}
\date{\today}
\begin{document}
\maketitle
\baselineskip=0.25in

\begin{abstract}
	Let $T$ be a tree on $n$ vertices whose edge weights are positive definite matrices of order $s$. The squared distance matrix of $T$, denoted by $\Delta$, is the $ns \times ns$ block matrix with $\Delta_{ij}=d(i,j)^2$, where $d(i,j)$ is the sum of the weights of the edges in the unique $(i,j)$-path. In this article, we obtain a formula for the determinant of $\Delta$ and find ${\Delta}^{-1}$ under some conditions.
\end{abstract}

{\bf AMS Subject Classification(2010):} 05C22, 05C50.

\textbf{Keywords. } Tree, Distance matrix, Squared distance matrix, Matrix weight, Determinant, Inverse.

\section{Introduction}
Let $T$ be a tree with vertex set $V(T)=\{1,\hdots,n\}$ and edge set $E(T)=\{e_1,\hdots,e_{n-1}\}$. If two vertices $i$ and $j$ are adjacent, we write $i\sim j$. Let us assign an orientation to each edge of $T$. Two edges  $e_i=(p,q)$ and $e_j=(r,s)$ of $T$ are \textit{ similarly oriented} if $d(p,r)=d(q,s)$ and is denoted by $e_i\Rightarrow e_j$, otherwise they are   \textit{oppositely oriented} and is denoted by $e_i \rightleftharpoons e_j$. The \textit{edge orientation matrix} $H=(h_{ij})$ of $T$ is the $(n-1)\times (n-1)$ matrix whose rows and columns are indexed by the edges of $T$ and the entries are defined \cite{bapat2013product} as
$$h_{ij}=
\begin{cases}
	\text{$1$} & \quad\text{if $e_i\Rightarrow e_j$, $i \neq j$};\\
	\text{$-1$} & \quad\text{if  $e_i \rightleftharpoons e_j$, $i \neq j$};\\
	\text{$1$} & \quad\text{if $i=j$.}
\end{cases}$$
The \textit{incidence matrix} $Q$ of $T$ is the $n \times n-1$ matrix with its rows indexed by $V(T)$ and the columns indexed by $E(T)$. The entry corresponding to the row $i$ and column $e_j$ of $Q$ is $1$ if $e_j$ originates at $i$, $-1$ if $e_j$ terminates at $i$, and zero if $e_j$ and $i$ are not incident. We assume that the same orientation is used while defining the edge orientation matrix $H$ and the incidence matrix $Q$. 

The \emph{distance} between the vertices $i,j\in V(T)$, denoted by $d(i,j)$, is the length of the shortest path between them in $T$. The \emph{distance matrix} of $T$, denoted by $D(T)$, is the $n \times n$ matrix whose rows and columns are indexed by the vertices of $T$ and the entries are defined as follows:  $D(T)=(d_{ij})$, where $d_{ij}=d(i,j)$. In \cite{bapat2013product}, the authors introduced the notion of \emph{squared distance matrix} $\Delta$, which is defined to be the Hadamard product $D\circ D$, that is, the $(i,j)$-th element of $\Delta$ is $d_{ij}^2$. For the unweighted tree $T$, the determinant of $\Delta$ is obtained in \cite{bapat2013product}, while the inverse and the inertia of $\Delta$ are considered in \cite{bapat2016squared}. In \cite{bapat2019}, the author considered an extension of these results to a weighted tree whose each edge is assigned a positive scalar weight and found the determinant and inverse of $\Delta$. Recently, in \cite{das2020squared}, the authors determined the inertia and energy of the squared distance matrix of a complete multipartite graph. Also, they characterized the graphs among all complete $t$-partite graphs on $n$ vertices for which the spectral radius of the squared distance matrix and the squared distance energy are maximum and minimum, respectively.

In this article, we consider a weighted tree $T$ on $n$ vertices with each of its edge weights are positive definite matrices of order $s$. For $i,j \in V(T)$, the distance $d(i,j)$ between $i$ and $j$ is the sum of the weight matrices in the unique $(i,j)$-path of $T$. Thus, the distance matrix $D=(d_{ij})$ of $T$ is the block matrix of order $ns\times ns$ with its $(i,j)$-th block $d_{ij}=d(i,j)$ if $i\neq j$, and is the $s \times s$ zero matrix if $i=j$. The squared distance matrix $\Delta$ of $T$ is the $ns\times ns$ block matrix with its $(i,j)$-th block is equal to $d(i,j)^2$ if $i\neq j$, and is the $s \times s$ zero matrix if $i=j$. The Laplacian matrix $L=(l_{ij})$ of $T$ is the $ns \times ns$ block matrix defined as follows: For $i,j \in V(T)$, $i\neq j$, the $(i,j)$-th block $l_{ij}=-(W(i,j))^{-1}$ if $i \sim j$, where $W(i,j)$ is the matrix weight of the edge joining the vertices $i$ and $j$, and the zero matrix otherwise. For $i \in V(T)$, the $(i,i)$-th block of $L$ is $\sum_{j\sim i}(W(i,j))^{-1}$. 

In the context of classical distance, the matrix weights have been studied in \cite{atik2017distance} and \cite{Bapat2006}. The Laplacian matrix with matrix weights have been studied in \cite{atik2017distance,Sumit2022laplacian} and \cite{hansen2021expansion}. The Resistance distance matrix and the Product distance matrix with matrix weights have been considered in \cite{Atik-resistance}, and \cite{Product-matrix}, respectively. In this article, we consider the squared distance matrix $\Delta$ of a tree $T$ with matrix weights and find the formulae for the determinant and inverse of $\Delta$, which generalizes the results of \cite{bapat2013product,bapat2016squared,bapat2019}.

This article is organized as follows. In Section $2$, we define needed notations and state some preliminary results, which will be used in the subsequent sections. In Section $3$, we find some relations of Incidence matrix, Laplacian matrix, and Distance matrix with squared distance matrix. In Section $4$ and Section $5$, we obtain the formula for the determinant and inverse of $\Delta$, respectively.

\section{Notations and preliminary results}
In this section, we define some useful notations and state some known results which will be needed to prove our main results.

The $n\times 1$ column vector with all ones and the identity matrix of order $n$ are denoted by $\textbf{1}_n$ and $I_n$, respectively. Let $J$ denote the matrix of appropriate size with all entries equal to $1$. The transpose of a matrix $A$ is denoted by $A^{\prime}$. Let $A$ be an $n\times n$ matrix partitioned as
$ A=\left[ {\begin{array}{cc}
		A_{11} & A_{12} \\
		A_{21} & A_{22} \\
\end{array} } \right]$,
where $A_{11}$ and $A_{22}$ are square matrices. If $A_{11}$ is nonsingular, then the \textit{Schur complement }of $A_{11}$ in $A$ is defined as $A_{22}-A_{21}{A_{11}^{-1}}A_{12}$. The following is the well known Schur complement formula: $ \det A= (\det A_{11})\det(A_{22}-A_{21}{A_{11}^{-1}}A_{12})$. The\textit{ Kronecker product }of two matrices $A=(a_{ij})_{m\times n}$ and $B=(b_{ij})_{p\times q}$, denoted by $A\otimes B$, is defined to be the $mp\times nq$ block matrix $[a_{ij}B]$. It is known that for the matrices $A,B,C$ and $D$, $(A\otimes B)(C\otimes D)=AC\otimes BD$, whenever the products $AC$ and $BD$ are defined. Also $(A\otimes B)^{-1}=A^{-1}\otimes B^{-1}$, if $A$ and $B$ are nonsingular. Moreover, if $A$ and $B$ are $n \times n$ and $p\times p$ matrices, then $\det(A\otimes B)=(\det A)^p(\det B)^n$. For more details about the Kronecker product, we refer to \cite{matrix-analysis}.

Let $H$ be the edge-orientation matrix, and $Q$ be the incidence matrix of the underlying unweighted tree with an orientation assigned to each edge. The edge-orientation matrix of a weighted tree whose edge weights are positive definite matrices of order $s$ is defined by replacing $1$ and $-1$ by $I_s$ and $-I_s$, respectively. The incidence matrix of a weighted tree is defined in a similar way. That is, for the matrix weighted tree $T$, the edge-orientation matrix and the incidence matrix are defined as $(H\otimes I_s)$ and $(Q\otimes I_s)$, respectively.

Now we introduce some more notations. Let $T$ be a tree with vertex set $V(T)=\{1,\hdots,n\}$ and edge set $E(T)=\{e_1,\hdots,e_{n-1}\}$. Let $W_i$ be the edge weight matrix associated with each edge $e_i$ of $T$, $i=1,2,\hdots,n$. Let $\delta_i$ be the degree of the vertex $i$ and set $\tau_i=2-\delta_i$ for $i=1,2,\hdots,n$. Let $\tau$ be the $n \times 1$ matrix with components $\tau_1,\hdots,\tau_n$ and $\Tilde{\tau}$ be the diagonal matrix with diagonal entries $\tau_1,\tau_2,\hdots,\tau_n$. Let $\hat{\delta_i}$ be the matrix weighted degree of $i$, which is defined as 
$$\hat{\delta_i}=\sum_{j:j\sim i}W(i,j), ~~i= 1,\hdots,n.$$
Let $\hat{\delta}$ be the $ns\times s$ block matrix with the components $\hat{\delta_1},\hdots,\hat{\delta_n}$. Let $F$ be a diagonal matrix with diagonal entries $W_1,W_2,\hdots,W_{n-1}$. It can be verified that $L=(Q\otimes I_s){F}^{-1} (Q^{\prime}\otimes I_s)$. 

A tree $T$ is said to be directed tree, if the edges of the tree $T$ are directed.  If the tree $T$ has no vertex of degree $2$, then $\hat{\tau}$ denote the diagonal matrix with diagonal elements $1/\tau_1,1/\tau_2,\hdots,1/\tau_n$. In the following theorem, we state a basic result about the edge-orientation matrix $H$ of an unweighted tree $T$, which is a combination of Theorem $9$ of \cite{bapat2013product} and Theorem $11$ of \cite{bapat2016squared}. 

\begin{thm}\cite{bapat2013product,bapat2016squared}\label{detH}
	Let $T$ be a directed tree on $n$ vertices and let $H$ and $Q$ be the edge-orientation matrix and incidence matrix of $T$, respectively. Then $\det H=2^{n-2}\prod_{i=1}^n \tau_i$. Furthermore, if $T$ has no vertex of degree $2$, then $H$ is nonsingular and $H^{-1}=\frac{1}{2}Q^{\prime}\hat{\tau}Q$.
\end{thm}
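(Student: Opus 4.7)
The plan is to prove the inverse formula first, and then extract the determinant as a corollary in the no-degree-$2$ case, handling the remaining case by a separate linear-dependence argument.

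Assume first that $T$ has no vertex of degree $2$, so $\hat\tau$ is well defined. I would verify the identity $H \cdot \tfrac12 Q'\hat\tau Q = I_{n-1}$ by a direct entrywise computation. Fix two edges $e_i$ and $e_j = (r,s)$. Because each column of $Q$ has exactly two nonzero entries, only edges $e_k$ incident to $r$ or $s$ contribute to $(Q'\hat\tau Q)_{kj}$. Hence the $(i,j)$-entry of $HQ'\hat\tau Q$ reduces to
\[
\frac{Q_{rj}}{\tau_r}\sum_{e_k \ni r} H_{ik}\,Q_{rk} \;+\; \frac{Q_{sj}}{\tau_s}\sum_{e_k \ni s} H_{ik}\,Q_{sk}.
\]
The combinatorial core of the argument is to evaluate the partial sum $\sum_{e_k \ni w} H_{ik}\,Q_{wk}$ at each vertex $w$. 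The edges at $w$ split according to their orientation at $w$, and a careful case analysis on the position of $w$ relative to the endpoints of $e_i$ (lying on the $e_i$-path, lying off it, or coinciding with an endpoint of $e_i$) shows that this partial sum simplifies to an expression scaling with $\tau_w = 2 - \delta_w$. After substitution, the contributions at $r$ and $s$ cancel when $i \ne j$ and combine to $2$ when $i = j$, proving $H^{-1} = \tfrac12 Q'\hat\tau Q$.

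With the inverse in hand, I compute $\det(Q'\hat\tau Q)$ via the Cauchy--Binet formula. Since $Q$ is $n \times (n-1)$,
\[
\det(Q'\hat\tau Q) = \sum_{S} (\det Q_S)^2 \prod_{i \in S} \frac{1}{\tau_i},
\]
where $S$ ranges over $(n-1)$-subsets of $V(T)$ and $Q_S$ denotes the row-submatrix of $Q$ indexed by $S$. Because $T$ is a tree, each reduced incidence matrix $Q_S$ (obtained by deleting exactly one row of $Q$) satisfies $\det Q_S = \pm 1$, so
\[
\det(Q'\hat\tau Q) \;=\; \sum_{k=1}^n \prod_{i \ne k} \frac{1}{\tau_i} \;=\; \frac{\sum_k \tau_k}{\prod_i \tau_i} \;=\; \frac{2}{\prod_i \tau_i},
\]
using the handshake identity $\sum_i \tau_i = 2n - 2(n-1) = 2$. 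Combining with $\det H \cdot \det(\tfrac12 Q'\hat\tau Q) = 1$ then yields $\det H = 2^{n-2} \prod_i \tau_i$.

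Finally, when $T$ has a vertex $v$ of degree $2$ with incident edges $e_a$ and $e_b$, the target formula predicts $\det H = 0$. I would establish this by producing an explicit linear dependence among the rows of $H$: because $v$ has only two neighbors, the comparisons of $e_a$ and $e_b$ against any third edge $e_k$ are rigidly linked through the unique path through $v$, forcing $H_{a,\cdot} = \pm H_{b,\cdot}$, with the sign determined by whether $e_a$ and $e_b$ are similarly or oppositely oriented. The main obstacle in the whole argument is the orientation case analysis in the first step; once the entrywise identity is established, the Cauchy--Binet computation and the degree-$2$ linear dependence follow with little additional work.
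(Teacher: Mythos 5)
The paper does not prove this statement at all: it imports it verbatim from \cite{bapat2013product,bapat2016squared} (Theorem 9 of the first and Theorem 11 of the second), so there is no in-paper proof to compare against. Judged on its own, your argument is correct and self-contained, and all three pieces fit together: the entrywise verification of $H\cdot\tfrac12 Q'\hat{\tau}Q=I_{n-1}$, the Cauchy--Binet evaluation $\det(Q'\hat{\tau}Q)=\sum_k\prod_{i\neq k}\tau_i^{-1}=2/\prod_i\tau_i$ (using $\det Q_S=\pm1$ for every reduced incidence matrix of a tree and $\sum_i\tau_i=2$), and the row dependence $H_{a,\cdot}=h_{ab}H_{b,\cdot}$ at a degree-$2$ vertex, which together cover both the nonsingular and the singular cases of the determinant formula. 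The one place where your sketch leans hardest on an unproved assertion is the ``combinatorial core'': you should record explicitly that for a fixed directed edge $e_i$ and any vertex $w$, the partial sum $S_w=\sum_{e_k\ni w}H_{ik}Q_{wk}$ equals $+\tau_w$ or $-\tau_w$ according to which of the two components of $T-e_i$ contains $w$ (with $w=p$ counting as the tail side and $w=q$ as the head side); the verification is that each edge at $w$ contributes $\pm1$ to $S_w$ depending only on whether it points toward or away from $e_i$, independently of its own orientation. Once that is in place, the off-diagonal vanishing is not mere bookkeeping but follows from the structural fact that the two endpoints $r,s$ of $e_j$ lie in the \emph{same} component of $T-e_i$, so $S_r/\tau_r=S_s/\tau_s$ while $Q_{rj}=-Q_{sj}$; and for $i=j$ the endpoints lie on opposite sides with $Q_{pi}=+1$, $Q_{qi}=-1$, giving $1+1=2$. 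With that lemma stated and checked, your proof is complete.
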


Next, we state a known result related to the distance matrix of a tree with matrix weights.

\begin{thm}[{\cite[Theorem 3.4]{atik2017distance}}]\label{thm:DL}
	Let $T$ be a tree on $n$ vertices whose each edge is assigned a positive definite matrix of order $s$. Let $L$ and $D$ be the Laplacian matrix and distance matrix of $T$, respectively. If $D$ is invertible, then the following assertions hold:
	\begin{enumerate}
		\item $LD=\tau \textbf{1}_n^{\prime}\otimes I_s-2I_n\otimes I_s$.
		\item $DL=\textbf{1}_n{\tau}^{\prime}\otimes I_s-2I_n\otimes I_s.$
	\end{enumerate}
\end{thm}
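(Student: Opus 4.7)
The plan is to verify both identities by computing the $(i,j)$-th $s\times s$ block of $LD$ and $DL$ directly, exploiting the fact that $L_{ik}$ is nonzero only when $k=i$ or $k$ is a neighbor of $i$. The main ingredient is a telescoping identity for distances along the unique paths of $T$: for adjacent $i,k$, the relation between $d(i,j)$ and $d(k,j)$ is completely determined by whether $k$ lies on the $(i,j)$-path or not.

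For the first identity I would fix $i,j\in V(T)$ and write
$$
(LD)_{ij}\;=\;L_{ii}D_{ij}+\sum_{k\sim i}L_{ik}D_{kj}
\;=\;\sum_{k\sim i}W(i,k)^{-1}\bigl(D_{ij}-D_{kj}\bigr),
$$
using $L_{ii}=\sum_{k\sim i}W(i,k)^{-1}$ and $L_{ik}=-W(i,k)^{-1}$ for $k\sim i$. The key step is the tree analysis: for a neighbor $k$ of $i$, either $k$ lies on the unique $(i,j)$-path, so the path decomposes as $i\to k\to\cdots\to j$ and $d(i,j)=W(i,k)+d(k,j)$, giving $D_{ij}-D_{kj}=W(i,k)$; or $i$ lies on the $(k,j)$-path, giving $d(k,j)=W(i,k)+d(i,j)$ and hence $D_{ij}-D_{kj}=-W(i,k)$. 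The case $i=j$ fits the second alternative for every neighbor $k$. Each summand therefore collapses to $\pm I_s$ via $W(i,k)^{-1}W(i,k)=I_s$. When $i\neq j$, exactly one neighbor of $i$ (the second vertex on the $(i,j)$-path) contributes $+I_s$ and the remaining $\delta_i-1$ neighbors contribute $-I_s$, giving $(2-\delta_i)I_s=\tau_iI_s$; when $i=j$, all $\delta_i$ neighbors contribute $-I_s$, giving $-\delta_iI_s=(\tau_i-2)I_s$. Both outcomes coincide with the $(i,j)$-th block of $\tau\mathbf{1}_n^\prime\otimes I_s-2I_n\otimes I_s$.

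For the second identity I would run the analogous computation on the right, starting from
$$
(DL)_{ij}\;=\;D_{ij}L_{jj}+\sum_{k\sim j}D_{ik}L_{kj}
\;=\;\sum_{k\sim j}\bigl(D_{ij}-D_{ik}\bigr)W(k,j)^{-1},
$$
and carrying out the same path-analysis with respect to the neighbors of $j$; this yields $\tau_jI_s$ when $i\neq j$ and $(\tau_j-2)I_s$ when $i=j$, matching the $(i,j)$-th block of $\mathbf{1}_n\tau^\prime\otimes I_s-2I_n\otimes I_s$.

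I do not anticipate a serious obstacle here: the only mildly delicate point is the bookkeeping for the diagonal case $i=j$ in the neighbor sum. Crucially, the cancelling products in both computations have the form $W(i,k)^{-1}W(i,k)$ (respectively $W(k,j)W(k,j)^{-1}$), so the noncommutativity of the positive definite matrix weights causes no issue. Note also that the invertibility of $D$ is not required to establish these identities; it would only be invoked if one wished to rearrange them to produce a formula for $D^{-1}$.
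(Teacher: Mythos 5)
Your computation is correct, and the bookkeeping in both the off-diagonal and diagonal cases checks out: the block $(LD)_{ij}=\sum_{k\sim i}W(i,k)^{-1}(D_{ij}-D_{kj})$ collapses to $\tau_i I_s$ or $(\tau_i-2)I_s$ exactly as you describe, and the products $W(i,k)^{-1}W(i,k)$ and $W(k,j)W(k,j)^{-1}$ appear in the right order so that noncommutativity is indeed harmless. Note, however, that the paper itself gives no proof of this statement; it is imported verbatim from the cited reference (Theorem 3.4 of \cite{atik2017distance}), so there is no internal argument to compare against --- your direct blockwise verification is the standard one and is what the cited source carries out. Your closing remark is also well taken: the two identities hold without any invertibility assumption on $D$, and the hypothesis is only inherited from the cited theorem's original context (where it is used to derive a formula for $D^{-1}$); indeed the present paper later applies these identities in settings where invertibility of $D$ plays no role in the algebra.
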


\section{Properties of the squared distance matrices of trees }
In this section, we find the relation of the squared distance matrix with other matrices, such as distance matrix, Laplacian matrix, incidence matrix, etc. We will use these results to obtain the formulae for determinants and inverses of the squared distance matrices of directed trees.

\begin{lem}\label{lem:Ddel}
	Let $T$ be a tree with vertex set $\{1,2,\hdots,n\}$ and each edge of $T$ is assigned a positive definite matrix weight of order $s$. Let $D$ and $\Delta$ be the distance matrix and the squared distance matrix of $T$, respectively. Then
	$\Delta (\tau \otimes I_s) =D \hat{\delta}.$ 
\end{lem}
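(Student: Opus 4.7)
The plan is to verify the identity one block at a time. Since both $\Delta(\tau\otimes I_s)$ and $D\hat{\delta}$ are block column matrices with $n$ blocks of size $s\times s$, it suffices to show, for each $k\in V(T)$, that
\[
\sum_{i=1}^n (2-\delta_i)\,d(k,i)^2 \;=\; \sum_{i=1}^n d(k,i)\,\hat{\delta}_i.
\]
My strategy is to expand both sides in the edge-weight matrices $W_e$ and to reorganize each as a sum indexed by pairs of edges of $T$.

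For the right-hand side, I would substitute $\hat{\delta}_i=\sum_{j\sim i}W(i,j)$ and interchange sums so that contributions are grouped by edge. Each undirected edge $e=\{u,v\}$ then contributes $(d(k,u)+d(k,v))\,W_e$; letting $u_e$ denote the endpoint of $e$ closer to $k$, the tree identity $d(k,v_e)=d(k,u_e)+W_e$ reduces this contribution to $(2\,d(k,u_e)+W_e)\,W_e$. Writing $d(k,u_e)$ as the sum of edge weights along the $(k,u_e)$-path then decomposes the right-hand side as $\sum_e W_e^2$ plus a sum over ordered pairs $(f,e)$ of edges with $f$ lying strictly on the path from $k$ to the near endpoint of $e$.

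For the left-hand side, I would expand $d(k,i)^2=\sum_{e,f\in P(k,i)}W_eW_f$ and swap the order of summation to obtain $\sum_{e,f}W_eW_f\cdot N_{e,f}$ with $N_{e,f}=\sum_{i\,:\,e,f\in P(k,i)}(2-\delta_i)$. The crucial combinatorial input is that, for each edge $e$, letting $T_e$ denote the component of $T-e$ not containing $k$, one has $\sum_{i\in T_e}(2-\delta_i)=1$: this follows by applying the handshake lemma to the subtree $T_e$, whose far endpoint $v_e$ has one fewer incident edge than in $T$, so that its degree sum equals $2(|T_e|-1)$. In a tree, two edges $e$ and $f$ both appear on some $P(k,i)$ precisely when they are comparable under the partial order ``$f\preceq e$ iff $f$ lies on the path from $k$ to the near endpoint of $e$'', and in that case the set of admissible $i$ is $T_e$ or $T_f$; hence $N_{e,f}$ equals $1$ on comparable pairs (including $e=f$) and $0$ otherwise.

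Substituting back and collecting the two orderings of each comparable pair then recovers the expression derived for the right-hand side, completing the proof. The main obstacle is the combinatorial bookkeeping in the previous paragraph: correctly identifying when two edges simultaneously lie on $P(k,i)$ and establishing the subtree handshake identity $\sum_{i\in T_e}(2-\delta_i)=1$. Once these are in hand, everything else reduces to routine path-expansion manipulations of sums of edge weights along tree paths.
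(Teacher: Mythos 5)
Your proof is correct and reaches essentially the same canonical form as the paper's, but by a genuinely different route. The paper fixes the row vertex $i$ and works with predecessors: it writes $2\sum_j d(i,j)^2=\sum_j d(i,j)^2+\sum_{j\neq i}\bigl(d(i,p(j))+W_j\bigr)^2$, uses the local count that each vertex $j$ is the predecessor of exactly $\delta_j-1$ vertices to collapse the left side to $2\sum_{j\neq i}d(i,p(j))W_j+\sum_{j\neq i}W_j^2$, and then massages $D\hat{\delta}$ to the same expression by splitting $\hat{\delta_j}=W_j+X_j$. You instead expand both sides fully over ordered pairs of edges and compute the coefficient $N_{e,f}$ combinatorially; your subtree handshake identity $\sum_{i\in T_e}(2-\delta_i)=1$ is the aggregated (global) form of the paper's predecessor count, and your characterization of when two edges lie on a common path $P(k,i)$ is correct, so the combinatorial core of your argument is sound. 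The one step to watch: your left-hand side yields the symmetrized product $W_eW_f+W_fW_e$ for each strictly comparable pair, while your right-hand side yields $2W_fW_e$, so the final matching silently uses that the edge-weight matrices commute. This is not a defect relative to the paper --- its expansion of $\bigl(d(i,p(j))+W_j\bigr)^2$ as $d(i,p(j))^2+2d(i,p(j))W_j+W_j^2$ makes exactly the same commutativity assumption --- but your ordered-pair bookkeeping makes the issue visible where the paper's notation hides it, and a fully careful write-up should either state that hypothesis or track the two orderings separately on both sides.
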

\begin{proof}
	Let $i \in \{1,2,\hdots,n\}$ be fixed. For $j \neq i$, let $p(j)$ be the predecessor of $j$ on the $(i,j)$-path of the underlying tree. Let $e_j$ be the edge between the vertices $p(j)$ and $j$. For $1 \leq j\leq n-1 $, let $W_j$ denote the weight of the edge $e_j$  and $X_j=\hat{\delta_j}-W_j$. Therefore, 
	\begin{eqnarray*}
		2\sum_{j=1}^n d(i,j)^2 &=&  \sum_{j=1}^n d(i,j)^2+\sum_{j\neq i} \Big(d(i,p(j))+W_j\Big)^2\\
		&=&\sum_{j=1}^n d(i,j)^2+\sum_{j\neq i} d(i,p(j))^2+2\sum_{j\neq i} d(i,p(j))W_j+\sum_{j\neq i}W_j^2.
	\end{eqnarray*}
	Since the vertex $j$ is the predecessor of $\delta_j-1$ vertices in the paths from $i$, we have
	$$\sum_{j\neq i} d(i,p(j))^2=\sum_{j=1}^n(\delta_j-1)d(i,j)^2.$$
	Thus,
	\begin{eqnarray*}
		2\sum_{j=1}^n d(i,j)^2 &=& \sum_{j=1}^n d(i,j)^2+\sum_{j=1}^n(\delta_j-1)d(i,j)^2+2\sum_{j\neq i} d(i,p(j))W_j+\sum_{j\neq i}W_j^2\\
		&=& \sum_{j=1}^n\delta_jd(i,j)^2+2\sum_{j\neq i} d(i,p(j))W_j+\sum_{j\neq i}W_j^2.
	\end{eqnarray*}
	Therefore, the $(i,j)$-th element of $\Delta (\tau \otimes I_s)$ is 
	\begin{align*}
		(\Delta (\tau \otimes I_s))_{ij}= \sum_{j=1}^n(2-\delta_j) d(i,j)^2=2\sum_{j\neq i} d(i,p(j))W_j+\sum_{j\neq i}W_j^2.   
	\end{align*}
	Now, let us compute the  $(i,j)$-th element of $D \hat{\delta}$.
	\begin{eqnarray*}
		(D \hat{\delta})_{ij}=\sum_{j=1}^n d(i,j)\hat{\delta_j} &=&  \sum_{j\neq i}\Big(d(i,p(j))+W_j\Big)(W_j+X_j)\\
		&=&\sum_{j\neq i} d(i,p(j))W_j+\sum_{j\neq i}W_j^2+\sum_{j\neq i}\Big(d(i,p(j))+W_j\Big)X_j.
	\end{eqnarray*}
	Note that $X_j$ is the sum of the weights of all edges incident to $j$, except $e_j$. Hence, 
	\begin{align*}
		\big(d(i,p(j))+W_j\big)X_j =d(i,j)X_j= \sum_{l\sim j,l\neq p(j)} d(i,p(l))W_l.
	\end{align*}
	Therefore, 
	$$\sum_{j\neq i}\big(d(i,p(j))+W_j\big)X_j=\sum_{j\neq i}\sum_{l\sim j,l\neq p(j)} d(i,p(l))W_l=\sum_{j\neq i} d(i,p(j))W_j. $$
	Thus,
	\begin{align*}
		(D \hat{\delta})_{ij}= \sum_{j=1}^n d(i,j)\hat{\delta_j}=2\sum_{j\neq i} d(i,p(j))W_j+\sum_{j\neq i}W_j^2=(\Delta (\tau \otimes I_s))_{ij}.  
	\end{align*}
	This completes the proof.
\end{proof}

\begin{lem}\label{lem:FHF}
	Let $T$ be a directed tree with vertex set $\{1,\hdots,n\}$ and edge set $\{e_1,\hdots,e_{n-1}\}$ with each edge $e_i$ is assigned a positive definite matrix weight $W_i$ of order $s$ for $1 \leq i \leq n-1$. Let  $H$ and $Q$ be the  edge orientation matrix and incidence matrix of $T$, respectively. 
	If $F$ is the diagonal matrix with diagonal entries $W_1,W_2,\hdots,W_{n-1}$, then 
	$$(Q^{\prime}\otimes I_s)\Delta (Q\otimes I_s)=-2F(H\otimes I_s)F.$$
\end{lem}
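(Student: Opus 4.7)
The plan is to verify the identity block by block. Writing $e_i=(p_i,q_i)$ and $e_j=(p_j,q_j)$ with the prescribed orientations, block multiplication combined with the definition of the incidence matrix gives
\[
\bigl[(Q'\otimes I_s)\Delta(Q\otimes I_s)\bigr]_{ij}
=\sum_{u,v\in V(T)}Q_{u,i}Q_{v,j}\,d(u,v)^2
=d(p_i,p_j)^2-d(p_i,q_j)^2-d(q_i,p_j)^2+d(q_i,q_j)^2,
\]
while the $(i,j)$-block of $-2F(H\otimes I_s)F$ equals $-2H_{ij}W_iW_j$. The claim therefore reduces to showing that, for every pair $(i,j)$, these two $s\times s$ blocks agree.

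For the diagonal case $i=j$ the block collapses immediately to $-2W_i^2$, matching $-2H_{ii}W_i^2$ since $H_{ii}=1$. For $i\neq j$, I would use the tree structure. The key observation is that, since $e_i\neq e_j$, the two endpoints $p_i$ and $q_i$ of $e_i$ lie in the same component of $T\setminus e_j$, so there is a single sign $\epsilon_j\in\{\pm1\}$ with $d(u,q_j)=d(u,p_j)+\epsilon_jW_j$ simultaneously for $u=p_i$ and $u=q_i$. Expanding the squared-distance terms using this relation, the $d(u,p_j)^2$ pieces cancel and the block reduces to $\epsilon_j\bigl[(d(q_i,p_j)-d(p_i,p_j))W_j+W_j(d(q_i,p_j)-d(p_i,p_j))\bigr]$. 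Applying the symmetric relation on the $e_i$-side gives $d(q_i,p_j)-d(p_i,p_j)=\epsilon_iW_i$ for another sign $\epsilon_i\in\{\pm1\}$, so the block becomes $\epsilon_i\epsilon_j(W_iW_j+W_jW_i)$, which coincides with $-2H_{ij}W_iW_j$ once the sign identification $\epsilon_i\epsilon_j=-H_{ij}$ is made.

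The main obstacle is this sign identification, which requires unpacking the similar/opposite orientation condition in terms of the unweighted graph distances $d_T(p_i,p_j)$ and $d_T(q_i,q_j)$: one checks that in a tree the parity of $\epsilon_i$ (resp.\ $\epsilon_j$) is controlled by which side of $e_i$ (resp.\ $e_j$) contains the other edge, and these side-selections align precisely so that $\epsilon_i\epsilon_j=-1$ iff $d_T(p_i,p_j)=d_T(q_i,q_j)$, i.e.\ iff $H_{ij}=+1$. Once the combinatorial signs are aligned, the matrix algebra is a routine expansion of squares; the ordering of the cross terms $W_iW_j$ and $W_jW_i$ must be tracked throughout but combines cleanly into the stated symmetric expression.
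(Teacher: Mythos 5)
Your proof is correct and follows essentially the same route as the paper: a direct blockwise computation of $(Q^{\prime}\otimes I_s)\Delta(Q\otimes I_s)$, expanding the four squared distances attached to the endpoints of $e_i$ and $e_j$ and reducing the result to a sign check against $H_{ij}$ (the paper does this by setting $Y=d(q,r)$ and casing on similar versus opposite orientation, which is exactly your identification $\epsilon_i\epsilon_j=-H_{ij}$ in disguise). One caveat worth noting: both your expansion and the paper's actually yield the block $-H_{ij}(W_iW_j+W_jW_i)$, which equals the claimed $-2H_{ij}W_iW_j$ only when $W_i$ and $W_j$ commute, so the final step of your argument (and of the published one) silently assumes this commutativity.
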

\begin{proof}
	For $i,j\in \{1,2,\hdots,n-1\}$, let $e_i$ and $e_j$ be two edges of $T$ such that $e_i$ is directed from $p$ to $q$ and $e_j$ is directed from $r$ to $s$. Let $W_i$ and $W_j$ be the  weights of the edges $e_i$ and $e_j$, respectively. If $d(q,r)=Y$, then it is easy to see that
	\begin{eqnarray*}
		\Big((Q^{\prime}\otimes I_s)\Delta (Q\otimes I_s)\Big)_{ij} &=&
		\begin{cases}
			\text{$(W_i+Y)^2+(W_j+Y)^2-(W_i+W_j+Y)^2-Y^2$,} & \text{if $e_i\Rightarrow e_j$,}\\
			\text{$-(W_i+Y)^2-(W_j+Y)^2+(W_i+W_j+Y)^2+Y^2$,}& \text{if $e_i \rightleftharpoons e_j$.}\\
		\end{cases}\\
		&=& 
		\begin{cases}
			\text{$-2W_iW_j$,} & \text{if $e_i\Rightarrow e_j$,}\\
			\text{$2W_iW_j$,}& \text{if $e_i \rightleftharpoons e_j$.}\\
		\end{cases}
	\end{eqnarray*}
	Note that $(F(H\otimes I_s)F)_{ij}=
	\begin{cases}
		\text{$W_iW_j$} & \quad\text{if $e_i\Rightarrow e_j$,}\\
		\text{$-W_iW_j$}& \quad\text{if $e_i \rightleftharpoons e_j$.}
	\end{cases}$\\
	Thus, $\Big((Q^{\prime}\otimes I_s)\Delta (Q\otimes I_s)\Big)_{ij}=-2(F(H\otimes I_s)F)_{ij}.$
\end{proof}

\begin{lem}\label{deltaL}
	Let $T$ be a  tree with vertex set $\{1,\hdots,n\}$ and edge set $\{e_1,\hdots,e_{n-1}\}$ with each edge $e_i$ is assigned a positive definite matrix weight $W_i$ of order $s$ for $1 \leq i \leq n-1$. Let $L,D$ and $\Delta$ be the Laplacian matrix, the distance matrix and the squared distance matrix of $T$, respectively. Then
	$\Delta L=2D(\Tilde{\tau}\otimes I_s)-\textbf{1}_n\otimes {\hat{\delta}^\prime}$.
\end{lem}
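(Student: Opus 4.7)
The plan is to verify the identity block by block, through a direct computation that mirrors the proof of Lemma \ref{lem:Ddel}. Using the sparsity of the Laplacian ($L_{jj} = \sum_{k\sim j} W(k,j)^{-1}$, $L_{kj} = -W(k,j)^{-1}$ for $k \sim j$, and zero otherwise), the product simplifies to
$$(\Delta L)_{ij} = \sum_{k \sim j} \bigl(d(i,j)^2 - d(i,k)^2\bigr)\, W(k,j)^{-1}.$$
The diagonal case $i = j$ is immediate: since $d(j,j) = 0$ and $d(j,k) = W(j,k)$ for each neighbor $k$, the sum collapses to $-\sum_{k \sim j} W(j,k) = -\hat{\delta}_j$, matching the $(j,j)$-block of $2D(\Tilde{\tau}\otimes I_s) - \textbf{1}_n\otimes \hat{\delta}^\prime$ (whose first term vanishes on the diagonal of $D$).

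For $i \neq j$, partition the $\delta_j$ neighbors of $j$ into the unique predecessor $p$ of $j$ on the $(i,j)$-path (which satisfies $d(i,p) + W(p,j) = d(i,j)$) and the remaining $\delta_j - 1$ neighbors $k$ (each satisfying $d(i,k) = d(i,j) + W(k,j)$). Expand $d(i,p)^2$ and each $d(i,k)^2$ as in the proof of Lemma \ref{lem:Ddel}, right-multiply each expansion by the corresponding $W(k,j)^{-1}$, and sum. The pure edge-weight contributions collapse to $-\sum_{k \sim j} W(k,j) = -\hat{\delta}_j$; meanwhile the $d(i,j)$-contributions (one positive from $p$ and $\delta_j - 1$ negative ones from the other neighbors, appearing both directly and in the conjugated form $W(k,j)\,d(i,j)\,W(k,j)^{-1}$) combine with total coefficient $2(2 - \delta_j) = 2\tau_j$. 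This yields the target $(i,j)$-block $2\tau_j d(i,j) - \hat{\delta}_j$, which is exactly the $(i,j)$-block of $2D(\Tilde{\tau}\otimes I_s) - \textbf{1}_n\otimes \hat{\delta}^\prime$.

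The main technical care is in the matrix-weighted expansion of the squared distances: after multiplying out $(d(i,j) \pm W(k,j))^2$ and then right-multiplying by $W(k,j)^{-1}$, the non-commuting cross-terms $d(i,j)\, W(k,j)$ and $W(k,j)\, d(i,j)$ must be grouped correctly, exactly as in the proof of Lemma \ref{lem:Ddel}. Once this bookkeeping is handled, the remaining algebra is routine.
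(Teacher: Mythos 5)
Your proposal is correct relative to the paper and takes essentially the same route: a direct blockwise computation of $(\Delta L)_{ij}=\sum_{k\sim j}\bigl(d(i,j)^2-d(i,k)^2\bigr)W(k,j)^{-1}$, split into the cases $i=j$ and $i\neq j$, with the neighbours of $j$ partitioned into the predecessor on the $(i,j)$-path and the remaining $\delta_j-1$ vertices, exactly as in the paper's Case 1 and Case 2. The only caveat is that your final collapse of the $d(i,j)$-terms to the coefficient $2\tau_j$ still requires $W(k,j)\,d(i,j)\,W(k,j)^{-1}=d(i,j)$ --- the same silent commutation the paper itself performs when it expands $(Z\pm W_k)^2$ as $Z^2\pm 2ZW_k+W_k^2$ --- so you have correctly flagged the delicate point but, like the paper, not actually discharged it.
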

\begin{proof}
	Let $i,j\in V(T)$ and the degree of the vertex $j$ is $t$. Suppose $j$ is adjacent to the vertices $v_1,v_2,\hdots,v_t$, and let $e_1,e_2,\hdots,e_t$ be the corresponding edges with edge weights $W_1,W_2,\hdots,W_t$, respectively.\\
	\textbf{Case 1.} For $i=j$, we have 
	\begin{eqnarray*}
		(\Delta L)_{ii}&=&\sum_{s=1}^n d(i,s)^2 l_{si}\\
		&=&\sum_{s\sim i} d(i,s)^2 l_{si}\\
		&=& W_1^2(-W_1)^{-1}+\hdots +W_t^2(-W_t)^{-1}\\
		&=&-(W_1+W_2+\hdots +W_t)\\
		&=&-\hat{\delta_i}\\
		&=& \big(2D(\Tilde{\tau}\otimes I_s)-\textbf{1}_n\otimes {\hat{\delta}^\prime}\big)_{ii}.
	\end{eqnarray*}
	\textbf{Case 2.} Let $i\neq j$. Without loss of generality, assume that the $(i,j)$-path passes through the vertex $v_1$ (it is possible that $i=v_1$). If $d(i,j)=Z$, then $d(i,v_1)=Z-W_1$, $d(i,v_2)=Z+W_2$, $d(i,v_3)=Z+W_3$, $\hdots, d(i,v_t)=Z+W_t$. Therefore,
	\begin{eqnarray*}
		(\Delta L)_{ij}&=&\sum_{s=1}^n d(i,s)^2 l_{sj}\\
		&=&\sum_{s\sim j} d(i,s)^2 l_{sj}+d(i,j)^2 l_{jj}\\
		&=& {d(i,v_1)}^2(-W_1)^{-1}+{d(i,v_2)}^2(-W_2)^{-1}+\hdots +{d(i,v_t)}^2(-W_t)^{-1}+d(i,j)^2 l_{jj}\\
		&=&(Z-W_1)^2(-W_1)^{-1}+(Z+W_2)^2(-W_2)^{-1}+(Z+W_3)^2(-W_3)^{-1}\\
		& &+\hdots +(Z+W_t)^2(-W_t)^{-1}+Z^2\big((W_1)^{-1}+(W_2)^{-1}+\hdots+(W_t)^{-1}\big)\\
		&=&(W_1^2-2ZW_1)(-W_1)^{-1}+(W_2^2+2ZW_2)(-W_2)^{-1}+(W_3^2+2ZW_3)(-W_3)^{-1}\\ & & +\hdots+(W_t^2+2ZW_t)(-W_t)^{-1}\\
		&=&-(W_1+W_2+\hdots +W_t)+2Z-2(t-1)Z\\
		&=& 2(2-t)Z-(W_1+W_2+\hdots +W_t)\\
		&=& 2\tau_j Z-\hat{\delta_j}\\
		&=& \big(2D(\Tilde{\tau}\otimes I_s)-\textbf{1}_n\otimes {\hat{\delta}^\prime}\big)_{ij}.
	\end{eqnarray*}
	This completes the proof.
\end{proof} 

\section{Determinant of the squared distance matrix}
In this section, we obtain a formula for the determinant of the squared distance matrix of a tree with positive definite matrix weights. First, we consider the trees with no vertex of degree $2$.

\begin{thm}\label{det1}
	Let $T$ be a tree on $n$ vertices, and let  $W_i$ be the weights of the edge $e_i$, where $W_i$'s  are positive definite matrices of order $s$, $i=1,2,\hdots,n-1$. If $T$ has no vertex of degree $2$, then 
	$$\det (\Delta)=(-1)^{(n-1)s}2^{(2n-5)s}\prod_{i=1}^n {(\tau_i)^s}\prod_{i=1}^{n-1}\det (W_i^2) \det\bigg(\sum_{i=1}^n \frac{\hat{\delta_i}^2}{\tau_i}\bigg ).$$
\end{thm}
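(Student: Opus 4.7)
The plan is to compute $\det\Delta$ via a Schur complement after augmenting the $n\times(n-1)$ incidence matrix $Q$ to a square matrix. I take $P=[Q\;\;\tau]$, an $n\times n$ matrix. Since $\mathbf{1}_n'Q=0$, $\mathbf{1}_n'\tau=2$, and $\det(Q'Q)=n$ (a consequence of the Matrix-Tree Theorem), one can write $\tau=\frac{2}{n}\mathbf{1}_n+Qy$ for some $y$ and use $\det[Q\;\mathbf{1}_n]^2=n\det(Q'Q)$ to conclude $\det P=\pm 2$, hence $\det(P\otimes I_s)^2=4^s$.

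By Lemma~\ref{lem:FHF}, the top-left $(n-1)s\times(n-1)s$ block of $(P'\otimes I_s)\Delta(P\otimes I_s)$ is $-2F(H\otimes I_s)F$; by Lemma~\ref{lem:Ddel} the remaining blocks involve $D\hat{\delta}$. Applying the Schur complement formula to the top-left block gives
$$4^s\,\det\Delta=\det\bigl(-2F(H\otimes I_s)F\bigr)\cdot\det S,$$
where $S$ is the resulting $s\times s$ Schur complement. Theorem~\ref{detH} yields $\det H=2^{n-2}\prod_i\tau_i$, so $\det\bigl(-2F(H\otimes I_s)F\bigr)=(-1)^{(n-1)s}\,2^{(2n-3)s}\,\prod_i\tau_i^s\,\prod_i\det(W_i^2)$. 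This accounts for every factor in the stated formula except $\det\bigl(\sum_i\hat{\delta_i}^2/\tau_i\bigr)$.

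The heart of the proof is to show $S=\sum_i\hat{\delta_i}^2/\tau_i$. Using $H^{-1}=\frac{1}{2}Q'\hat{\tau}Q$ (Theorem~\ref{detH}) together with $L=(Q\otimes I_s)F^{-1}(Q'\otimes I_s)$, the middle term of $S$ simplifies to $-\frac{1}{4}\hat{\delta}'DL(\hat{\tau}\otimes I_s)LD\hat{\delta}$. Applying $LD=\tau\mathbf{1}_n'\otimes I_s-2I_n\otimes I_s$ and $DL=\mathbf{1}_n\tau'\otimes I_s-2I_n\otimes I_s$ from Theorem~\ref{thm:DL}, together with the elementary identities $\hat{\tau}\tau=\mathbf{1}_n$, $\tau'\hat{\tau}=\mathbf{1}_n'$, and $(\mathbf{1}_n'\otimes I_s)L=0$, one reduces $DL(\hat{\tau}\otimes I_s)LD$ to $4(\hat{\tau}\otimes I_s)-2(J_n\otimes I_s)$. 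Post-multiplying the identity of Lemma~\ref{deltaL} by $\mathbf{1}_n\otimes I_s$ and using $L(\mathbf{1}_n\otimes I_s)=0$ gives $D(\tau\otimes I_s)=\mathbf{1}_n\otimes\bigl(\sum_k W_k\bigr)$; consequently the two terms in $S$ proportional to $\bigl(\sum_k W_k\bigr)^2$ cancel, leaving $S=\hat{\delta}'(\hat{\tau}\otimes I_s)\hat{\delta}=\sum_i\hat{\delta_i}^2/\tau_i$.

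The main obstacle is this last cancellation: it requires careful juggling of several block-Kronecker identities relating $\tau,\hat{\tau},\hat{\delta},D,$ and $L$. Once $S$ is identified, combining with the first factor and dividing by $4^s$ yields the claimed formula.
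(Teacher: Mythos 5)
Your proposal is correct in substance but follows a genuinely different bordering than the paper. The paper augments $Q$ with the standard basis vector $t_1$, so the bottom-right block of $(P'\otimes I_s)\Delta(P\otimes I_s)$ is $0$ and the Schur complement is the single quadratic form $(\Delta_1'L)(\hat{\tau}\otimes I_s)(\Delta_1'L)'$, which is evaluated blockwise from Lemma~\ref{deltaL} as $\sum_i(2\tau_i d_{1i}-\hat{\delta_i})^2/\tau_i$; the cross terms cancel by the first row of Lemma~\ref{lem:Ddel}, and $\det(P\otimes I_s)=\pm1$ so no normalization is needed. You instead border with $\tau$, which costs you the factor $4^s$ (your computation $\det[Q\;\tau]=\pm2$ is right) but lets you exploit $\Delta(\tau\otimes I_s)=D\hat{\delta}$ globally; the Schur complement then becomes $(\tau'\otimes I_s)D\hat{\delta}+\tfrac14\hat{\delta}'DL(\hat{\tau}\otimes I_s)LD\hat{\delta}$, and your reduction $DL(\hat{\tau}\otimes I_s)LD=4(\hat{\tau}\otimes I_s)-2(J_n\otimes I_s)$ is correct. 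The exponent bookkeeping ($2^{(2n-3)s}/4^s=2^{(2n-5)s}$) checks out. Three wrinkles to repair. First, a sign: the subtracted term $-A_{21}A_{11}^{-1}A_{12}$ equals $+\tfrac14\hat{\delta}'DL(\hat{\tau}\otimes I_s)LD\hat{\delta}$, not $-\tfrac14(\cdots)$; with your stated sign the $(\sum_kW_k)^2$ terms would add rather than cancel, so make the sign consistent with the cancellation you assert. Second, the identity $D(\tau\otimes I_s)=\mathbf{1}_n\otimes\bigl(\sum_kW_k\bigr)$ is true and is exactly what you need, but your derivation does not parse: Lemma~\ref{lem:Ddel} is an $ns\times s$ identity and cannot be post-multiplied by $\mathbf{1}_n\otimes I_s$. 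You should either prove it directly (the computation mirrors the proof of Lemma~\ref{lem:Ddel} with first powers in place of squares) or argue that $LD(\tau\otimes I_s)=(\tau\mathbf{1}_n'\tau-2\tau)\otimes I_s=0$ forces $D(\tau\otimes I_s)=\mathbf{1}_n\otimes C$ and then identify $C$ from one row. Third, you invoke Theorem~\ref{thm:DL}, which as stated carries the hypothesis that $D$ is invertible; the paper's own proof of this determinant formula deliberately avoids that theorem, so you should either verify the hypothesis or note that the identities $LD=\tau\mathbf{1}_n'\otimes I_s-2I_{ns}$ and $DL=\mathbf{1}_n\tau'\otimes I_s-2I_{ns}$ hold unconditionally. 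With these repairs your argument is a valid alternative proof.
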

\begin{proof}
	Let us assign an orientation to each edge of $T$, and let $H$ be the edge orientation matrix and $Q$ be the incidence matrix of the underlying unweighted tree.
	
	Let $\Delta_i$ denote the $i$-th column block of the block matrix $\Delta$. Let $t_i$ be the $n \times 1$ column vector with $1$ at the $i$-th position and $0$ elsewhere, $i=1,2,\hdots,n$. Then
	\begin{equation}\label{eqn1}
		\left[ {\begin{array}{c}
				Q^{\prime}\otimes I_s\\
				t_1^{\prime}\otimes I_s\\
		\end{array} } \right]
		\Delta 
		\left[ {\begin{array}{cc}
				Q\otimes I_s & t_1\otimes I_s\\
		\end{array} } \right]=
		\left[ {\begin{array}{cc}
				(Q^{\prime}\otimes I_s)\Delta (Q\otimes I_s) & (Q^{\prime}\otimes I_s)\Delta_1\\
				\Delta_1^{\prime}(Q\otimes I_s) & 0\\
		\end{array} } \right].
	\end{equation}
	Since $\det\left[ {\begin{array}{c}
			Q^{\prime}\otimes I_s\\
			t_1^{\prime}\otimes I_s\\
	\end{array} } \right]=\det \Bigg( \left[ {\begin{array}{c}
			Q^{\prime}\\
			t_1^{\prime}\\
	\end{array} } \right]\otimes I_s \Bigg)=\pm 1$,  by taking determinant of matrices in both sides of equation (\ref{eqn1}), we have
	\begin{align*}
		\det (\Delta) =&
		\det \left[ {\begin{array}{cc}
				(Q^{\prime}\otimes I_s)\Delta (Q\otimes I_s) & (Q^{\prime}\otimes I_s)\Delta_1\\
				\Delta_1^{\prime}(Q\otimes I_s) & 0\\
		\end{array} } \right].
	\end{align*}
	Using Lemma \ref{lem:FHF}, we have $\det (\Delta)=\det \left[ {\begin{array}{cc}
			-2F(H\otimes I_s)F & (Q^{\prime}\otimes I_s)\Delta_1\\
			\Delta_1^{\prime}(Q\otimes I_s) & 0\\
	\end{array} } \right].$  By Theorem \ref{detH}, we have $\det H=2^{n-2}\prod_{i=1}^n \tau_i$ and hence $\det(H\otimes I_s)=(\det H)^s=2^{(n-2)s}\prod_{i=1}^n \tau_i^s$. Thus, $-2F(H\otimes I_s)F$ is nonsingular, and by the Schur complement formula, we have
	\begin{eqnarray*}
		\det (\Delta) &=& \left[ {\begin{array}{cc}
				-2F(H\otimes I_s)F & (Q^{\prime}\otimes I_s)\Delta_1\\
				\Delta_1^{\prime}(Q\otimes I_s) & 0\\
		\end{array} } \right]\\
		&=& \det(-2F(H\otimes I_s)F)\det \Big(-\Delta_1^{\prime}(Q\otimes I_s)(-2F(H\otimes I_s)F)^{-1}(Q^{\prime}\otimes I_s)\Delta_1\Big)\\
		&=&(-1)^{(n-1)s}2^{(n-2)s}\prod_{i=1}^{n-1}\det(W_i^2) \det(H\otimes I_s)\det\Big(\Delta_1^{\prime}(Q\otimes I_s)F^{-1}(H\otimes I_s)^{-1}F^{-1}(Q^{\prime}\otimes I_s)\Delta_1\Big).
	\end{eqnarray*}
	Now, from Theorem \ref{detH}, it follows that $(H\otimes I_s)^{-1}=H^{-1}\otimes I_s=\frac{1}{2}Q^{\prime}\hat{\tau}Q\otimes I_s=\frac{1}{2}(Q^{\prime}\hat{\tau}Q\otimes I_s)$. Therefore, 
	\begin{equation}\label{eqn det}
		\det (\Delta)=(-1)^{(n-1)s}2^{(2n-5)s}\prod_{i=1}^n {(\tau_i)^s}\prod_{i=1}^{n-1}\det(W_i^2)\det \Big(\Delta_1^{\prime}(Q\otimes I_s)F^{-1}(Q^{\prime}\hat{\tau}Q\otimes I_s)F^{-1}(Q^{\prime}\otimes I_s)\Delta_1\Big). 
	\end{equation}
	Now, by Lemma \ref{deltaL} and Lemma \ref{lem:Ddel}, we have
	\begin{eqnarray*}
		& &\Delta_1^{\prime}(Q\otimes I_s)F^{-1}(Q^{\prime}\hat{\tau}Q\otimes I_s)F^{-1}(Q^{\prime}\otimes I_s)\Delta_1\\
		&=&\Delta_1^{\prime}(Q\otimes I_s)F^{-1}(Q^{\prime}\otimes I_s)(\hat{\tau}\otimes I_s)(Q\otimes I_s)F^{-1}(Q^{\prime}\otimes I_s)\Delta_1\\
		&=&\Big(\Delta_1^{\prime}(Q\otimes I_s)F^{-1}(Q^{\prime}\otimes I_s)\Big)(\hat{\tau}\otimes I_s)\Big(\Delta_1^{\prime}(Q\otimes I_s)F^{-1}(Q^{\prime}\otimes I_s)\Big)^{\prime}\\
		&=&\big(\Delta_1^{\prime}L\big)(\hat{\tau}\otimes I_s)\big(\Delta_1^{\prime}L\big)^{\prime}\\
		&=&\sum_i\big(2\tau_i d_{1i}-\hat{\delta_i}\big)^2\frac{1}{\tau_i}\\
		&=&\sum_i\big(4{\tau_i}^2 d_{1i}^2+{\hat{\delta_i}}^2-4\tau_i d_{1i}\hat{\delta_i}\big)\frac{1}{\tau_i}\\
		&=&\sum_i 4{\tau_i}^2 d_{1i}^2+\sum_i \frac{\hat{\delta_i}^2}{\tau_i}-\sum_i 4d_{1i}\hat{\delta_i}\\
		&=&\sum_i \frac{\hat{\delta_i}^2}{\tau_i}.
	\end{eqnarray*}
	Substituting the value of $\Delta_1^{\prime}(Q\otimes I_s)F^{-1}(Q^{\prime}\hat{\tau}Q\otimes I_s)F^{-1}(Q^{\prime}\otimes I_s)\Delta_1$ in (\ref{eqn det}), we get the required result.
\end{proof}

\begin{figure}
	\centering
	\includegraphics[scale= 0.50]{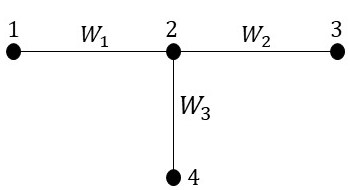}
	\caption{ Tree $T_1$ on 4 vertices}
	\label{fig1}
\end{figure}

Next, let us illustrate the above theorem by an example.
\begin{ex}
	Consider the tree $T_1$ in Figure 1, where the edge weights are 
	\begin{align*}
		W_1=\left[ {\begin{array}{cc}
				1 & 0\\
				0 & 1\\ 
		\end{array} } \right], \qquad
		W_2=\left[ {\begin{array}{cc}
				2 & 0\\
				0 & 1\\ 
		\end{array} } \right], \qquad
		W_3=\left[ {\begin{array}{cc}
				1 & 0\\
				0 & 2\\ 
		\end{array} } \right].   
	\end{align*}
\end{ex}
Then, 
\begin{align*}
	\Delta =&\left[ {\begin{array}{cccc}
			0 & W_1^2 & (W_1+W_2)^2 & (W_1+W_3)^2\\
			W_1^2 & 0 & W_2^2 & W_3^2\\ 
			(W_1+W_2)^2 & W_2^2 & 0 & (W_2+W_3)^2\\ 
			(W_1+W_3)^2 & W_3^2 & (W_2+W_3)^2 & 0\\
	\end{array} } \right]  \\
	=&\left[ {\begin{array}{cccccccc}
			0 & 0 & 1 & 0 & 9 & 0 & 4 & 0\\
			0 & 0 & 0 & 1 & 0 & 4 & 0 & 9\\
			1 & 0 & 0 & 0  & 4 & 0 & 1 & 0\\
			0 & 1 & 0 & 0 & 0 & 1 & 0 & 4\\
			9 & 0 & 4 & 0 & 0 & 0 & 9 & 0\\
			0 & 4 & 0 & 1 & 0 & 0 & 0 & 9\\
			4 & 0 & 1 & 0 & 9 & 0 & 0 & 0 \\
			0 & 9 & 0 & 4 & 0 & 9 & 0 & 0\\
	\end{array} } \right] ~ \text{and}\\   
	\sum_{i=1}^4 \frac{\hat{\delta_i}^2}{\tau_i}=& W_1^2+W_2^2+W_3^2-(W_1+W_2+W_3)^2=
	\left[ {\begin{array}{cc}
			-10 & 0\\
			0 & -10\\ 
	\end{array} } \right]. 
\end{align*}
One can verify that,
$$\det (\Delta)= 102400= (-1)^{6}2^{6}\prod_{i=1}^3 {(\tau_i)^2}\prod_{i=1}^{3}\det({W_i}^2) \det\Big (\sum_{i=1}^4 \frac{\hat{\delta_i}^2}{\tau_i}\Big ).$$ 

Next, we obtain a formula for the determinant of the squared distance matrix of a tree $T$, which has exactly one vertex of degree $2$.

\begin{thm}\label{det}
	Let $T$ be a tree on $n$ vertices with the edge set $E(T)=\{e_1,e_2,\hdots,e_{n-1}\}$. Let the positive definite  matrices $W_1,W_2,\hdots,W_{n-1}$ of order $s$ be the weights of the edges $e_1,e_2,\hdots,e_{n-1}$, respectively. Let $v$ be the vertex of degree $2$ and $u$ and $w$ be its neighbours in $T$. If $e_i=(u,v)$ and $e_j=(v,w)$, then 
	$$\det (\Delta)=-(1)^{(n-1)s}2^{(2n-5)s}\det(W_i+W_j)^2 \prod_{k=1}^{n-1} \det(W_k^2)\prod_{k\neq v}\tau_k^s.$$
\end{thm}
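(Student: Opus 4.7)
The plan is to mimic the proof of Theorem~\ref{det1} while working around the obstruction that $H$ is singular here (since $\tau_v=0$ forces $\det H = 0$ by Theorem~\ref{detH}). The key auxiliary object is the contracted tree $T'$, obtained from $T$ by deleting $v$ and joining $u,w$ by a single edge $e^*$ of weight $W_i+W_j$; since $T'$ has $n-1$ vertices and no vertex of degree $2$, Theorems~\ref{detH} and~\ref{det1} both apply to $T'$. Write $\tilde{Q}$, $\tilde{F}$, $\tilde{H}$ for the submatrices of $Q$, $F$, $H$ obtained by deleting the row and/or column indexed by $e_j$. A case analysis based on which side of $v$ each edge lies on shows that $\tilde{H}$ coincides with the edge orientation matrix $H_{T'}$ of $T'$ (identifying the $e_i$-row/column of $\tilde{H}$ with those of $e^*$ in $H_{T'}$). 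Consequently Theorem~\ref{detH} applied to $T'$ gives that $\tilde{H}$ is invertible with $\det \tilde{H} = 2^{n-3}\prod_{k \neq v}\tau_k$ and $\tilde{H}^{-1} = \tfrac{1}{2} Q_{T'}^{\prime} \hat{\tau}_{T'} Q_{T'}$, where $Q_{T'}$ and $\hat{\tau}_{T'}$ are the incidence matrix and $\hat{\tau}$-matrix of $T'$.

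Next, use the augmentation
\[
\Lambda = \begin{pmatrix} \tilde{Q}^{\prime} \\ t_v^{\prime} \\ t_w^{\prime} \end{pmatrix},
\]
an $n\times n$ matrix. A short row-operation argument shows $\det \Lambda = \pm 1$: since the $e_j$-row of $Q^{\prime}$ equals $t_v^{\prime}-t_w^{\prime}$, adding the bottom $t_w^{\prime}$ row converts that row into $t_v^{\prime}$; a row permutation then turns the resulting matrix into $\Lambda$. Starting from the $n\times n$ matrix obtained by stacking $Q^{\prime}$ on top of $t_w^{\prime}$ (which has determinant $\pm 1$ by the total unimodularity of tree incidence matrices), these operations preserve $|\det|$. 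Thus $\det \Delta = \det((\Lambda \otimes I_s)\Delta(\Lambda^{\prime} \otimes I_s))$, and Lemma~\ref{lem:FHF} restricted to the edges $\neq e_j$ yields the block form
\[
(\Lambda \otimes I_s)\Delta(\Lambda^{\prime} \otimes I_s) = \begin{pmatrix} A & B \\ B^{\prime} & C \end{pmatrix}, \quad A = -2\tilde{F}(\tilde{H}\otimes I_s)\tilde{F}, \quad C = \begin{pmatrix} 0 & W_j^2 \\ W_j^2 & 0 \end{pmatrix}.
\]
Since $\tilde{H}$ is invertible, so is $A$, and the Schur complement formula gives $\det \Delta = \det A \cdot \det(C - B^{\prime}A^{-1}B)$. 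A direct computation using $\det \tilde{H} = 2^{n-3}\prod_{k\neq v}\tau_k$ gives
\[
\det A = (-1)^{(n-2)s}\,2^{(2n-5)s}\,\prod_{k \neq v}\tau_k^s\,\prod_{k \neq j}\det(W_k^2),
\]
which already accounts for most of the factors in the target formula.

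The main obstacle is evaluating the $2s\times 2s$ Schur complement $S = C - B^{\prime}A^{-1}B$ and proving $\det S = (-1)^s \det(W_i+W_j)^2 \det(W_j^2)$, since this is exactly what is needed for $\det A \cdot \det S$ to reduce to the stated formula. The plan for this step is to substitute $A^{-1} = -\tfrac{1}{4}\tilde{F}^{-1}(Q_{T'}^{\prime}\hat{\tau}_{T'}Q_{T'} \otimes I_s)\tilde{F}^{-1}$ into $B^{\prime}A^{-1}B$ and expand using the identities of Lemmas~\ref{lem:Ddel} and~\ref{deltaL} applied to $T'$, analogously to how the sum $\sum_i \hat{\delta}_i^{\,2}/\tau_i$ appeared in the proof of Theorem~\ref{det1} but now localized to the $2\times 2$ block indexed by $v,w$. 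The factor $\det(W_i+W_j)^2$ should arise from the weight $W_i+W_j$ of the contracted edge $e^*$ playing the role of a ``matrix-weighted degree'' at the missing vertex $v$; carefully tracking all signs through the augmentation and collapsing the cross terms is the bulk of the calculation.
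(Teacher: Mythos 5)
Your reduction is set up correctly as far as it goes: $\det\Lambda=\pm1$, the identification of $H(j|j)$ with the edge orientation matrix of the contracted tree, and the value $\det A=(-1)^{(n-2)s}2^{(2n-5)s}\prod_{k\neq v}\tau_k^s\prod_{k\neq j}\det(W_k^2)$ are all fine, and the target identity $\det S=(-1)^s\det(W_i+W_j)^2\det(W_j^2)$ is indeed what would close the argument (it checks out numerically on the paper's $T_2$ example: $144=36\cdot 4$). But there is a genuine gap at exactly the point you flag as ``the main obstacle'': you never compute the Schur complement $S=C-B^{\prime}A^{-1}B$, and that computation is the entire content of the theorem beyond bookkeeping. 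A plan to ``substitute $A^{-1}$ and expand using Lemmas \ref{lem:Ddel} and \ref{deltaL} applied to $T'$'' is not a proof, and the difficulty is real in your setup: $C=\left[\begin{smallmatrix}0 & W_j^2\\ W_j^2 & 0\end{smallmatrix}\right]$ has no structure that survives subtracting $B^{\prime}A^{-1}B$ (all four blocks of $S$ are nonzero in general), $B$ involves the columns $\Delta_v$ and $\Delta_w$ of the squared distance matrix of $T$ rather than of $T'$, and the cross term between those two columns is not something computed in the proof of Theorem \ref{det1}, so it is not clear your expansion collapses to the claimed value without substantial new work.

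The paper sidesteps this entirely by a different bordering. It keeps all $n-1$ columns of $Q$ and borders by the single vertex $v$, reaching $\left[\begin{smallmatrix}-2(H\otimes I_s) & *\\ * & 0\end{smallmatrix}\right]$ after scaling by $F^{-1}$. Because $e_i$ and $e_j$ meet at the degree-$2$ vertex, the $i$-th and $j$-th block rows/columns of $H\otimes I_s$ coincide, so the operations $R_j-R_i$ and $C_j-C_i$ annihilate the $e_j$ row and column of the $H$-part while the surviving border entry becomes $-(W_i+W_j)$. After one Schur complement with respect to $-2H(j|j)$, the remaining factor is
\[
\det\left[\begin{array}{cc}0 & W_i+W_j\\ W_i+W_j & Y\end{array}\right]=(-1)^s\det(W_i+W_j)^2,
\]
which is independent of $Y$; the hard term $Y=-\tfrac14\sum_{k\neq v}\hat{\delta}_k^2/\tau_k$ never needs to be evaluated, only shown nonsingular to justify the final Schur step. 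In short, the zero diagonal block in the corner is what makes the determinant computable, and your choice of augmentation destroys that structure; to complete your route you would have to carry out the full evaluation of $B^{\prime}A^{-1}B$, which you have not done.
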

\begin{proof}
	Let us assign an orientation to each edge of $T$. Without loss of generality, assume that, the edge $e_i$ is directed from $u$ to $v$ and the edge $e_j$ is directed from $v$ to $w$. 
	
	Let $\Delta_i$ denote the $i$-th column block of the block matrix $\Delta$. Let $t_i$ be the $n \times 1$ column vector with $1$ at the $i$-th position and $0$ elsewhere, $i=1,2,\hdots,n$.  Therefore, by using Lemma \ref{lem:FHF}, we have
	\begin{eqnarray*}
		\left[ {\begin{array}{c}
				Q^{\prime}\otimes I_s\\
				t_v^{\prime}\otimes I_s\\
		\end{array} } \right]
		\Delta 
		\left[ {\begin{array}{cc}
				Q\otimes I_s & t_v\otimes I_s\\
		\end{array} } \right] &=&
		\left[ {\begin{array}{cc}
				(Q^{\prime}\otimes I_s)\Delta (Q\otimes I_s) & (Q^{\prime}\otimes I_s)\Delta_v\\
				\Delta_v^{\prime}(Q\otimes I_s) & 0\\
		\end{array} } \right]\\
		&=& \left[ {\begin{array}{cc}
				-2F(H\otimes I_s)F) & (Q^{\prime}\otimes I_s)\Delta_v\\
				\Delta_v^{\prime}(Q\otimes I_s) & 0\\
		\end{array} } \right]
	\end{eqnarray*}
	Pre-multiplying and post-multiplying the above equation by $\left[ {\begin{array}{cc}
			F^{-1}& 0\\
			0 & I_s\\
	\end{array} } \right]$, we get
	\begin{eqnarray*}
		\left[ {\begin{array}{cc}
				F^{-1}& 0\\
				0 & I_s\\
		\end{array} } \right]
		\left[ {\begin{array}{c}
				Q^{\prime}\otimes I_s\\
				t_v^{\prime}\otimes I_s\\
		\end{array} } \right]
		\Delta 
		\left[ {\begin{array}{cc}
				Q\otimes I_s & t_v\otimes I_s\\
		\end{array} } \right]
		\left[ {\begin{array}{cc}
				F^{-1}& 0\\
				0 & I_s\\
		\end{array} } \right] &=&
		\left[ {\begin{array}{cc}
				-2(H\otimes I_s) & F^{-1}(Q^{\prime}\otimes I_s)\Delta_v\\
				\Delta_v^{\prime}(Q\otimes I_s)F^{-1} & 0\\
		\end{array} } \right],  
	\end{eqnarray*}
	which implies that
	\begin{eqnarray*}
		(\det(F^{-1}))^2 \det(\Delta) =\det 
		\left[ {\begin{array}{cc}
				-2(H\otimes I_s) & F^{-1}(Q^{\prime}\otimes I_s)\Delta_v\\
				\Delta_v^{\prime}(Q\otimes I_s)F^{-1} & 0\\
		\end{array} } \right].  
	\end{eqnarray*}
	Let $H(j|j)$ denote the $(n-2)s\times (n-2)s$ submatrix obtained by deleting the all blocks in the $j$-th row and $j$-th column from $H\otimes I_s$. Let   $R_i$ and $C_i$ denote the $i$-th row and $i$-th column of the matrix $\left[ {\begin{array}{cc}
			-2(H\otimes I_s) & F^{-1}(Q^{\prime}\otimes I_s)\Delta_v\\
			\Delta_v^{\prime}(Q\otimes I_s)F^{-1} & 0\\
	\end{array} } \right]$, respectively. Note that the blocks in the $i$-th and $j$-th column of $H\otimes I_s$ are identical. Now, perform the operations $R_j-R_i$ and $C_j-C_i$ in $\left[ {\begin{array}{cc}
			-2(H\otimes I_s) & F^{-1}(Q^{\prime}\otimes I_s)\Delta_v\\
			\Delta_v^{\prime}(Q\otimes I_s)F^{-1} & 0\\
	\end{array} } \right]$, and then interchange $R_j$ and $R_{n-1}$, $C_j$ and $C_{n-1}$ . Since $\Delta_v^{\prime}(Q\otimes I_s)F^{-1})_j-( \Delta_v^{\prime}(Q\otimes I_s)F^{-1})_i=-W_j-W_i$, therefore 
	\begin{equation}
		\det \left[ {\begin{array}{cc}
				-2(H\otimes I_s) & F^{-1}(Q^{\prime}\otimes I_s)\Delta_v\\
				\Delta_v^{\prime}(Q\otimes I_s)F^{-1} & 0\\
		\end{array} } \right] = \det \left[ {\begin{array}{ccc}
				-2H(j|j) & 0 & F^{-1}(Q^{\prime}\otimes I_s)\Delta_v\\
				0 & 0 & -W_j-W_i\\
				\Delta_v^{\prime}(Q\otimes I_s)F^{-1} & -W_j-W_i & 0\\
		\end{array} } \right]. 
	\end{equation}
	Since $H(j|j)$ is the edge orientation matrix of the tree obtained by deleting the vertex $v$ and replacing the edges $e_i$ and $e_j$ by a single edge directed from $u$ to $w$ in the tree, by Theorem \ref{detH}, we have
	$\det(H(j|j)=2^{(n-3)s}\prod_{k \neq v}\tau_k^s$, which is nonzero. Therefore, by applying the Schur complement formula, we have 
	\begin{eqnarray*}
		& &\det \left[ {\begin{array}{ccc}
				-2H(j|j) & 0 & F^{-1}(Q^{\prime}\otimes I_s)\Delta_v\\
				0 & 0 & -W_j-W_i\\
				\Delta_v^{\prime}(Q\otimes I_s)F^{-1} & -W_j-W_i & 0\\
		\end{array} } \right] \\
		&=& \det(-2H(j|j)) \det \bigg(\left[ {\begin{array}{cc}
				0 & -W_j-W_i\\
				-W_j-W_i & 0\\
		\end{array} } \right]-\\ & &~~~~~~~~~~~~~~~~~~~~~~~~~~~
		\left[ {\begin{array}{cc}
				0 & 0 \\
				0 & \Delta_v^{\prime}(Q\otimes I_s)F^{-1}(-2H(j|j))^{-1}F^{-1}(Q^{\prime}\otimes I_s)\Delta_v\\
		\end{array} } \right]  \bigg)\\
		&=&(-2)^{(n-2)s}\det(H(j|j)) \det \left[ {\begin{array}{cc}
				0 & -W_j-W_i\\
				-W_j-W_i & -\Delta_v^{\prime}(Q\otimes I_s)F^{-1}(-2H(j|j))^{-1}F^{-1}(Q^{\prime}\otimes I_s)\Delta_v\\
		\end{array} } \right].
	\end{eqnarray*}
	Again, by the proof of Theorem \ref{det1}, we have $$\Delta_v^{\prime}(Q\otimes I_s)F^{-1}(-2H(j|j))^{-1}F^{-1}(Q^{\prime}\otimes I_s)\Delta_v=-\frac{1}{4}\sum_{i\neq v} \frac{\hat{\delta_i}^2}{\tau_i}.$$ Therefore,  
	\begin{eqnarray*}
		& &\det \left[ {\begin{array}{ccc}
				-2H(j|j) & 0 & F^{-1}(Q^{\prime}\otimes I_s)\Delta_v\\
				0 & 0 & -W_j-W_i\\
				\Delta_v^{\prime}(Q\otimes I_s)F^{-1} & -W_j-W_i & 0\\
		\end{array} } \right] \\
		&=& (-2)^{(n-2)s}\det(H(j|j)) \det \left[ {\begin{array}{cc}
				0 & -W_j-W_i\\
				-W_j-W_i & \frac{1}{4}\sum_{i\neq v} \frac{\hat{\delta_i}^2}{\tau_i}\\
		\end{array} } \right]\\ 
		&=& (-2)^{(n-2)s}\det(H(j|j)) \det \left[ {\begin{array}{cc}
				0 & W_j+W_i\\
				W_j+W_i & -\frac{1}{4}\sum_{i\neq v} \frac{\hat{\delta_i}^2}{\tau_i}\\
		\end{array} } \right].
	\end{eqnarray*}
	Since $\det \Big(-\frac{1}{4}\sum_{i\neq v} \frac{\hat{\delta_i}^2}{\tau_i}\Big)\neq 0$, by Schur complement formula, we have
	\begin{eqnarray*}
		\det \left[ {\begin{array}{cc}
				0 & W_j+W_i\\
				W_j+W_i & -\frac{1}{4}\sum_{i\neq v} \frac{\hat{\delta_i}^2}{\tau_i}\\
		\end{array} } \right]
		&=&\det \bigg(-\frac{1}{4}\sum_{i\neq v} \frac{\hat{\delta_i}^2}{\tau_i}\bigg) \det \bigg[0-(W_j+W_i) \bigg(-\frac{1}{4}\sum_{i\neq v} \frac{\hat{\delta_i}^2}{\tau_i}\bigg)^{-1}( W_j+W_i)\bigg]\\
		&=&(-1)^s \det \bigg(-\frac{1}{4}\sum_{i\neq v} \frac{\hat{\delta_i}^2}{\tau_i}\bigg) \det \bigg(-\frac{1}{4}\sum_{i\neq v} \frac{\hat{\delta_i}^2}{\tau_i}\bigg)^{-1} \det(W_j+W_i)^2\\
		&=&(-1)^s \det(W_i+W_j)^2.
	\end{eqnarray*}
	
	Thus,
	\begin{eqnarray*}
		\det (\Delta) &=&(\det F)^2(-1)^{s}(-2)^{(n-2)s}2^{(n-3)s}\prod_{k\neq v}\tau_k^s~\det(W_i+W_j)^2\\
		&=&(-1)^{(n-1)s}2^{(2n-5)s}\det(W_i+W_j)^2\prod_{k=1}^{n-1}\det(W_k^2)\prod_{k\neq v}\tau_k^s.
	\end{eqnarray*}
	This completes the proof.
\end{proof}

\begin{figure}
	\centering
	\includegraphics[scale= 0.50]{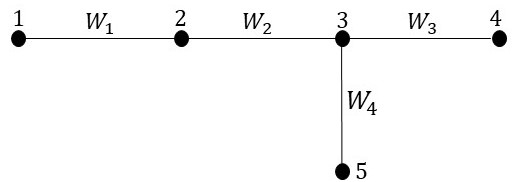}
	\caption{ Tree $T_2$ on 5 vertices }
	\label{fig2}
\end{figure}

Now, we illustrate the above theorem by the following example.
\begin{ex}
	Consider the tree $T_2$ in Figure \ref{fig2}, where the edge weights are 
	\begin{align*}
		W_1=\left[ {\begin{array}{cc}
				1 & 0\\
				0 & 1\\ 
		\end{array} } \right], \qquad
		W_2=\left[ {\begin{array}{cc}
				2 & 0\\
				0 & 1\\ 
		\end{array} } \right], \qquad
		W_3=\left[ {\begin{array}{cc}
				1 & 0\\
				0 & 2\\ 
		\end{array} } \right], \qquad
		W_4=\left[ {\begin{array}{cc}
				2 & 0\\
				0 & 2\\ 
		\end{array} } \right].   
	\end{align*}
\end{ex}
Then, 
\begin{eqnarray*}
	\Delta &=&\left[ {\begin{array}{ccccc}
			0 & W_1^2 & (W_1+W_2)^2 & (W_1+W_2+W_3)^2 & (W_1+W_2+W_4)^2\\
			W_1^2 & 0 & W_2^2 & (W_2+W_3)^2 & (W_2+W_4)^2\\ 
			(W_1+W_2)^2 & W_2^2 & 0 & W_3^2 & W_4^2\\ 
			(W_1+W_2+W_3)^2 &(W_2+W_3)^2 & W_3^2 & 0 & (W_3+W_4)^2\\
			(W_1+W_2+W_3)^2 & (W_2+W_4)^2 & W_4^2 & (W_3+W_4)^2 & 0\\
	\end{array} } \right]  \\
	&=&\left[ {\begin{array}{cccccccccc}
			0 & 0 & 1 & 0 & 9 & 0 & 16 & 0 & 25 & 0\\
			0 & 0 & 0 & 1 & 0 & 4 & 0 & 16 & 0 & 16\\
			1 & 0 & 0 & 0  & 4 & 0 & 9 & 0 & 16 & 0\\
			0 & 1 & 0 & 0 & 0 & 1 & 0 & 9 & 0 & 9\\
			9 & 0 & 4 & 0 & 0 & 0 & 1 & 0 & 4 & 0\\
			0 & 4 & 0 & 1 & 0 & 0 & 0 & 4 & 0 & 4\\
			16 & 0 & 9 & 0 & 1 & 0 & 0 & 0  & 9 & 0\\
			0 & 16 & 0 & 9 & 0 & 4 & 0 & 0 & 0 & 16\\
			25 & 0 & 16 & 0 & 4 & 0 & 9 & 0 & 0 & 0 \\
			0 & 16 & 0 & 9 & 0 & 4 & 0 & 16 & 0 & 0 \\
	\end{array} } \right].    
\end{eqnarray*}
One can verify that, $$\det (\Delta)= 9437184= (-1)^{8}2^{10}\det(W_1+W_2)^2 \prod_{i=1}^{4} \det(W_i^2)\prod_{k\neq 2}\tau_k^s.$$

\begin{cor}
	Let $T$ be a tree on $n$ vertices and each edge $e_i$ of $T$ is assigned a  positive definite matrix $W_i$ order $s$, $i=1,2,\hdots,n-1$. If $T$ has at least two vertices of degree $2$, then $\det (\Delta)=0$.
\end{cor}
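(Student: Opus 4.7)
The plan is to combine the block identity of Lemma \ref{lem:FHF} with the bordered-matrix construction used in the proof of Theorem \ref{det1}, reducing the problem to a rank estimate on the edge-orientation matrix $H$. Fix an orientation of $T$, let $Q$ and $H$ be the incidence and edge-orientation matrices of the underlying unweighted tree, and form
$$M = \left[\begin{array}{cc} -2F(H\otimes I_s)F & (Q'\otimes I_s)\Delta_1 \\ \Delta_1'(Q\otimes I_s) & 0 \end{array}\right].$$
Exactly as in equation (\ref{eqn1}), sandwiching $\Delta$ between $\left[\begin{array}{c} Q'\otimes I_s \\ t_1'\otimes I_s \end{array}\right]$ and $[\,Q\otimes I_s\;\;t_1\otimes I_s\,]$ (each of determinant $\pm 1$) yields $\det(\Delta) = \pm \det(M)$, after using Lemma \ref{lem:FHF} to rewrite the top-left block. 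It therefore suffices to show that $M$ is rank-deficient.

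The key observation is that each vertex of degree $2$ forces a pair of columns of $H$ to coincide. For the two edges $e_i, e_j$ incident to a degree-$2$ vertex $v$, oriented consistently along the path through $v$ (as in the proof of Theorem \ref{det}), one checks from the definition that $e_i \Rightarrow e_k$ iff $e_j \Rightarrow e_k$ for every other edge $e_k$, so the $i$-th and $j$-th columns of $H$ are identical. Given two degree-$2$ vertices $v_1, v_2$, I will argue that the resulting column-relations are linearly independent. If $v_1, v_2$ are non-adjacent, the two equal-column pairs are disjoint and give two independent dependencies. If $v_1, v_2$ are adjacent and share an edge $e_a$, writing $e_b$ and $e_c$ for the other edges at $v_1$ and $v_2$, the relations $C_a = C_b$ and $C_a = C_c$ remain independent relations among $\{C_a, C_b, C_c\}$ since $e_b \ne e_c$. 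In either case $\mathrm{rank}(H) \leq n-3$, and since $F$ is nonsingular, $\mathrm{rank}(-2F(H\otimes I_s)F) \leq s(n-3)$.

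The top $(n-1)s$ rows of $M$ form the block $[-2F(H\otimes I_s)F,\; (Q'\otimes I_s)\Delta_1]$, whose rank is at most $s(n-3) + s = s(n-2)$, since appending $s$ further columns can raise the rank by at most $s$. Adjoining the remaining $s$ rows of $M$ raises the rank by at most a further $s$, so
$$\mathrm{rank}(M) \leq s(n-2) + s = s(n-1) < ns,$$
and hence $\det(M) = 0$ and $\det(\Delta) = 0$.

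The main point needing care is the rank bound on $H$ in the adjacent case, where the two column-dependencies share an edge; verifying they remain linearly independent is the only non-routine piece of bookkeeping, and everything else follows from the block factorization already developed for Theorems \ref{det1} and \ref{det}.
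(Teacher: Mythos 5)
Your proof is correct, but it takes a genuinely different route from the paper's. The paper disposes of this corollary in one line: it invokes the determinant formula of Theorem \ref{det}, whose right-hand side contains the factor $\prod_{k\neq v}\tau_k^s$, and observes that a second degree-$2$ vertex contributes a zero factor. You instead give a direct rank argument: after reducing $\det(\Delta)$ to $\det(M)$ via the bordering by $[\,Q\otimes I_s\;\;t_1\otimes I_s\,]$ and Lemma \ref{lem:FHF}, you show that each degree-$2$ vertex produces an equal-column pair in $H$ (a fact the paper itself uses in the proof of Theorem \ref{det}), that two such vertices yield two independent column relations even when the pairs overlap in one shared edge, hence $\operatorname{rank}(H)\le n-3$, and then the column/row-count bookkeeping gives $\operatorname{rank}(M)\le (n-1)s<ns$. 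Your argument is longer but self-contained, and it is arguably on firmer footing than the paper's: Theorem \ref{det} is stated and proved for a tree with a \emph{single} degree-$2$ vertex (its proof relies on the nonsingularity of $H(j|j)$ and of $\sum_{i\neq v}\hat{\delta_i}^2/\tau_i$, both of which fail if a second degree-$2$ vertex is present), so applying its formula verbatim in the corollary requires an implicit extension that your approach avoids. One tiny point worth making explicit: the equal-column claim depends on orienting the two edges at the degree-$2$ vertex consistently along the path through it; with the opposite orientation one gets $C_i=-C_j$ instead, which still yields a dependency, so your rank bound holds for any orientation.
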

\begin{proof}
	The result follows from Theorem \ref{det}, since $\tau_i=0$ for at least two values of $i$.
\end{proof} 

\section{Inverse of the squared distance matrix}
This section considers trees with no vertex of degree $2$ and obtains an explicit formula for the inverse of its squared distance matrix. First, let us prove the following lemma which will be used to find $\Delta^{-1}$.

\begin{lem}\label{lem:inv}
	Let $T$ be a tree of order $n$ with no vertex of degree $2$ and each edge of $T$ is assigned a positive definite matrix weight of order $s$. If $\beta=\Hat{{\delta}^{\prime}}(\Hat{\tau}\otimes I_s)\Hat{\delta}$ and $\eta=2\tau \otimes I_s-L(\hat{\tau}\otimes I_s)\Hat{\delta}$, then 
	$$\Delta \eta =\textbf{1}_n \otimes \beta.$$
\end{lem}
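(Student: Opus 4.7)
The plan is to expand $\Delta \eta$ using the definition $\eta = 2(\tau \otimes I_s) - L(\hat{\tau} \otimes I_s)\hat{\delta}$ and then apply the two identities we already have: Lemma \ref{lem:Ddel} gives $\Delta(\tau \otimes I_s) = D\hat{\delta}$, and Lemma \ref{deltaL} gives $\Delta L = 2D(\tilde{\tau}\otimes I_s) - \mathbf{1}_n \otimes \hat{\delta}'$. So I would write
\[
\Delta \eta = 2\,\Delta(\tau \otimes I_s) - (\Delta L)(\hat{\tau} \otimes I_s)\hat{\delta} = 2D\hat{\delta} - \bigl[2D(\tilde{\tau}\otimes I_s) - \mathbf{1}_n \otimes \hat{\delta}'\bigr](\hat{\tau} \otimes I_s)\hat{\delta},
\]
and then simplify the two resulting pieces.

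For the first piece, I would use the mixed-product identity $(\tilde{\tau}\otimes I_s)(\hat{\tau}\otimes I_s) = (\tilde{\tau}\hat{\tau}) \otimes I_s = I_n \otimes I_s$, since $\tilde{\tau}$ and $\hat{\tau}$ are reciprocal diagonal matrices (this is where the hypothesis that $T$ has no vertex of degree $2$ enters, guaranteeing all $\tau_i \neq 0$). Hence $2D(\tilde{\tau}\otimes I_s)(\hat{\tau}\otimes I_s)\hat{\delta} = 2D\hat{\delta}$, which cancels with the leading $2D\hat{\delta}$.

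For the second piece I need to identify $(\mathbf{1}_n \otimes \hat{\delta}')(\hat{\tau}\otimes I_s)\hat{\delta}$ with $\mathbf{1}_n \otimes \beta$. Viewing $\mathbf{1}_n \otimes \hat{\delta}'$ as an $n \times 1$ block column whose every block equals $\hat{\delta}'$, and noting that $(\hat{\tau}\otimes I_s)\hat{\delta}$ is the $ns \times s$ block column with $j$-th block $\hat{\delta}_j/\tau_j$, each block of the product is
\[
\hat{\delta}'(\hat{\tau}\otimes I_s)\hat{\delta} = \sum_{j=1}^n \frac{\hat{\delta}_j^{\,2}}{\tau_j} = \beta,
\]
so the product equals $\mathbf{1}_n \otimes \beta$. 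Combining the two pieces gives $\Delta \eta = 2D\hat{\delta} - 2D\hat{\delta} + \mathbf{1}_n \otimes \beta = \mathbf{1}_n \otimes \beta$, as desired.

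There is no real obstacle here; the only point requiring a moment's care is the block-wise verification that $(\mathbf{1}_n \otimes \hat{\delta}')(\hat{\tau}\otimes I_s)\hat{\delta} = \mathbf{1}_n \otimes \beta$, which cannot be done by a single application of the mixed-product rule (the sizes of $\mathbf{1}_n$ and $\hat{\tau}$ are not compatible that way), so I would justify it by the direct block computation above. Everything else is a mechanical substitution from Lemmas \ref{lem:Ddel} and \ref{deltaL}.
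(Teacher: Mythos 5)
Your proof is correct and follows essentially the same route as the paper: both arguments rest on right-multiplying the identity $\Delta L=2D(\tilde{\tau}\otimes I_s)-\mathbf{1}_n\otimes\hat{\delta}'$ of Lemma \ref{deltaL} by $(\hat{\tau}\otimes I_s)\hat{\delta}$, cancelling via $\tilde{\tau}\hat{\tau}=I_n$, identifying $(\mathbf{1}_n\otimes\hat{\delta}')(\hat{\tau}\otimes I_s)\hat{\delta}$ with $\mathbf{1}_n\otimes\beta$, and invoking Lemma \ref{lem:Ddel} to replace $D\hat{\delta}$ by $\Delta(\tau\otimes I_s)$. The only difference is presentational (you expand $\Delta\eta$ directly while the paper computes $\Delta L(\hat{\tau}\otimes I_s)\hat{\delta}$ and rearranges), and your extra care with the block-wise verification of $(\mathbf{1}_n\otimes\hat{\delta}')(\hat{\tau}\otimes I_s)\hat{\delta}=\mathbf{1}_n\otimes\beta$ is a sound addition.
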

\begin{proof}
	By Lemma \ref{deltaL}, we have $\Delta L=2D(\Tilde{\tau}\otimes I_s)-\textbf{1}_n \otimes {\hat{\delta}^\prime}$. Hence, 
	\begin{eqnarray*}
		\Delta L(\Hat{\tau}\otimes I_s)\hat{\delta}&=&2D\hat{\delta}-(\textbf{1}_n \otimes {\hat{\delta}^\prime})(\Hat{\tau}\otimes I_s)\hat{\delta}\\
		&=&2D\hat{\delta}-\textbf{1}_n \otimes
		\sum_{i=1}^n\frac{\hat{\delta_i}^2}{\tau_i}. 
	\end{eqnarray*}
	Since $\beta=\Hat{{\delta}^{\prime}}(\Hat{\tau}\otimes I_s)\Hat{\delta}=\sum_{i=1}^n\frac{\hat{\delta_i}^2}{\tau_i}$, therefore
	$\Delta L(\Hat{\tau}\otimes I_s)\hat{\delta}=2D\hat{\delta}-\textbf{1}_n \otimes \beta$. By Lemma \ref{lem:Ddel}, we have $\Delta (\tau \otimes I_s) =D \hat{\delta}$ and hence $\Delta L(\Hat{\tau}\otimes I_s)\hat{\delta}= 2\Delta (\tau \otimes I_s)-\textbf{1}_n\otimes \beta$. This completes the proof.
\end{proof}

If the tree $T$ has no vertex of degree $2$ and $\det(\beta) \neq 0$, then $\Delta$ is nonsingular, that is, ${\Delta}^{-1}$ exists. In the next theorem, we determine the formula for ${\Delta}^{-1}$.

\begin{thm}
	Let $T$ be a tree of order $n$ with no vertex of degree $2$ and each edge of $T$ is assigned a positive definite matrix weight of order $s$. Let $\beta=\Hat{{\delta}^{\prime}}(\Hat{\tau}\otimes I_s)\Hat{\delta}$ and $\eta=2\tau \otimes I_s-L(\hat{\tau}\otimes I_s)\Hat{\delta}$. If $\det(\beta) \neq 0$, then 
	$${\Delta}^{-1}=-\frac{1}{4}L(\Hat{\tau}\otimes I_s)L+\frac{1}{4}\eta {\beta}^{-1} {\eta}^{\prime}.$$
\end{thm}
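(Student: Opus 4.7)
The plan is to verify directly that $\Delta$ times the proposed inverse equals $I_{ns}$. Set
$$A = -\tfrac{1}{4}L(\hat{\tau}\otimes I_s)L+\tfrac{1}{4}\eta\,\beta^{-1}\eta',$$
and compute $\Delta A$ in two pieces. This reduces everything to the identities already collected in Section 3 plus Theorem \ref{thm:DL}, and so no genuinely new calculation is needed: the proof is a bookkeeping exercise in Kronecker products.

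For the first piece $-\tfrac14\,\Delta L(\hat{\tau}\otimes I_s)L$, I would apply Lemma \ref{deltaL} to replace $\Delta L$ by $2D(\tilde{\tau}\otimes I_s)-\mathbf{1}_n\otimes \hat{\delta}'$. Since $\tilde\tau$ and $\hat\tau$ are diagonal inverses of each other, $(\tilde{\tau}\otimes I_s)(\hat{\tau}\otimes I_s)=I_{ns}$, so this first piece collapses to
$$-\tfrac12\, DL+\tfrac14\,(\mathbf{1}_n\otimes \hat{\delta}')(\hat{\tau}\otimes I_s)L.$$
Then Theorem \ref{thm:DL}(2) gives $DL=\mathbf{1}_n\tau'\otimes I_s-2I_n\otimes I_s$, so the piece becomes
$$I_{ns}-\tfrac12(\mathbf{1}_n\tau'\otimes I_s)+\tfrac14\,(\mathbf{1}_n\otimes\hat{\delta}')(\hat{\tau}\otimes I_s)L.$$

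For the second piece $\tfrac14\,\Delta\eta\beta^{-1}\eta'$, Lemma \ref{lem:inv} tells me $\Delta\eta=\mathbf{1}_n\otimes\beta$, and the mixed-product rule then gives $\Delta\eta\beta^{-1}=(\mathbf{1}_n\otimes\beta)(1\otimes\beta^{-1})=\mathbf{1}_n\otimes I_s$. Transposing $\eta$ (using that $L$ and $\hat\tau\otimes I_s$ are symmetric and that each $\hat{\delta}_i$ is symmetric) yields $\eta'=2\tau'\otimes I_s-\hat{\delta}'(\hat{\tau}\otimes I_s)L$. Multiplying on the left by $\mathbf{1}_n\otimes I_s$, together with the identity $(\mathbf{1}_n\otimes I_s)\hat{\delta}'=\mathbf{1}_n\otimes\hat{\delta}'$, turns the second piece into
$$\tfrac12(\mathbf{1}_n\tau'\otimes I_s)-\tfrac14\,(\mathbf{1}_n\otimes\hat{\delta}')(\hat{\tau}\otimes I_s)L.$$

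Adding the two pieces, the $\tfrac12(\mathbf{1}_n\tau'\otimes I_s)$ terms cancel, as do the $\tfrac14(\mathbf{1}_n\otimes\hat{\delta}')(\hat{\tau}\otimes I_s)L$ terms, leaving precisely $I_{ns}$. The only delicate point I would flag is keeping straight that each block $\hat{\delta}_i$ is symmetric (so $\hat{\delta}'$ can be manipulated freely) and that $\mathbf{1}_n\otimes \hat{\delta}'$ really does factor as $(\mathbf{1}_n\otimes I_s)\hat{\delta}'$ at the block level; once this is verified the cancellations are automatic. Nonsingularity of $\Delta$ under the hypothesis $\det(\beta)\neq 0$ is a byproduct, since the right-inverse exhibited is also a two-sided inverse.
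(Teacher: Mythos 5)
Your proposal is correct and follows essentially the same route as the paper's own proof: split $\Delta X$ into the two pieces, rewrite $\Delta L$ via Lemma \ref{deltaL}, invoke Theorem \ref{thm:DL} for $DL$, use Lemma \ref{lem:inv} for $\Delta\eta$, transpose $\eta$, and observe the cancellation to $I_{ns}$. The points you flag (symmetry of each $\hat{\delta}_i$, the factorization $\mathbf{1}_n\otimes\hat{\delta}'=(\mathbf{1}_n\otimes I_s)\hat{\delta}'$, and that a one-sided inverse of a square matrix is two-sided) are all implicit in the paper's argument and check out.
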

\begin{proof}
	Let $X=-\frac{1}{4}L(\Hat{\tau}\otimes I_s)L+\frac{1}{4}\eta {\beta}^{-1} {\eta}^{\prime}$.
	Then, 
	\begin{equation}\label{eqn:inv1}
		\Delta X=-\frac{1}{4}\Delta L(\Hat{\tau}\otimes I_s)L+\frac{1}{4}\Delta \eta {\beta}^{-1} {\eta}^{\prime}.
	\end{equation}
	By Lemma \ref{deltaL}, we have $\Delta L=2D(\Tilde{\tau}\otimes I_s)-\textbf{1}_n\otimes {\hat{\delta}^\prime}$. Therefore, 
	$$\Delta L(\Hat{\tau}\otimes I_s)L=2DL-(\textbf{1}_n\otimes {\hat{\delta}^\prime})(\Hat{\tau}\otimes I_s)L. $$
	By Theorem \ref{thm:DL}, $DL=\textbf{1}_n{\tau}^{\prime}\otimes I_s-2I_n\otimes I_s$ and hence
	\begin{equation}\label{eqn:inv2}
		\Delta L(\Hat{\tau}\otimes I_s)L=2\Big(\textbf{1}_n{\tau}^{\prime}\otimes I_s-2I_n\otimes I_s\Big)-(\textbf{1}_n\otimes {\hat{\delta}^\prime})(\Hat{\tau}\otimes I_s)L.
	\end{equation}
	By Lemma \ref{lem:inv}, we have $\Delta \eta =\textbf{1}_n\otimes \beta=(\textbf{1}_n\otimes I_s)\beta$. Therefore, from equation (\ref{eqn:inv1}) and (\ref{eqn:inv2}), we have 
	\begin{eqnarray*}
		\Delta X &=& -\frac{1}{2}\Big(\textbf{1}_n{\tau}^{\prime}\otimes I_s-2I_n\otimes I_s\Big)+\frac{1}{4}(\textbf{1}_n\otimes {\hat{\delta}^\prime})(\Hat{\tau}\otimes I_s)L+\frac{1}{4}(\textbf{1}_n \otimes I_s){\eta}^{\prime}\\
		& = & -\frac{1}{2}\textbf{1}_n{\tau}^{\prime}\otimes I_s+I_n\otimes I_s+\frac{1}{4}(\textbf{1}_n\otimes {\hat{\delta}^\prime})(\Hat{\tau}\otimes I_s)L+\frac{1}{4}(\textbf{1}_n\otimes I_s)\Big(2\tau \otimes I_s-L(\hat{\tau}\otimes I_s)\Hat{\delta}\Big)^{\prime}\\
		& = & -\frac{1}{2}\textbf{1}_n{\tau}^{\prime}\otimes I_s+I_n\otimes I_s+\frac{1}{4}(\textbf{1}_n\otimes {\hat{\delta}^\prime})(\Hat{\tau}\otimes I_s)L+\frac{1}{4}(\textbf{1}_n\otimes I_s)\Big(2\tau^{\prime} \otimes I_s-{\Hat{\delta}}^{\prime}(\hat{\tau}\otimes I_s)L\Big)\\
		&=& I_n\otimes I_s=I_{ns}.
	\end{eqnarray*}
	This completes the proof.
\end{proof}
Now, let us illustrate the above formula for $\Delta^{-1}$ by an example.
\begin{ex}
	Consider the tree $T_1$ in Figure 1, where the edge weights are 
	\begin{align*}
		W_1=\left[ {\begin{array}{cc}
				1 & 0\\
				0 & 1\\ 
		\end{array} } \right], \qquad
		W_2=\left[ {\begin{array}{cc}
				2 & 0\\
				0 & 1\\ 
		\end{array} } \right], \qquad
		W_3=\left[ {\begin{array}{cc}
				1 & 0\\
				0 & 2\\ 
		\end{array} } \right].   
	\end{align*}
\end{ex}
Then, 
\begin{align*}
	\Delta =&\left[ {\begin{array}{cccc}
			0 & W_1^2 & (W_1+W_2)^2 & (W_1+W_3)^2\\
			W_1^2 & 0 & W_2^2 & W_3^2\\ 
			(W_1+W_2)^2 & W_2^2 & 0 & (W_2+W_3)^2\\ 
			(W_1+W_3)^2 & W_3^2  & (W_2+W_3)^2 & 0\\
	\end{array} } \right]  \\
	=&\left[ {\begin{array}{cccccccc}
			0 & 0 & 1 & 0 & 9 & 0 & 4 & 0\\
			0 & 0 & 0 & 1 & 0 & 4 & 0 & 9\\
			1 & 0 & 0 & 0  & 4 & 0 & 1 & 0\\
			0 & 1 & 0 & 0 & 0 & 1 & 0 & 4\\
			9 & 0 & 4 & 0 & 0 & 0 & 9 & 0\\
			0 & 4 & 0 & 1 & 0 & 0 & 0 & 9\\
			4 & 0 & 1 & 0 & 9 & 0 & 0 & 0 \\
			0 & 9 & 0 & 4 & 0 & 9 & 0 & 0\\
	\end{array} } \right],\\
	L=&\left[ {\begin{array}{cccc}
			W_1^{-1}& -W_1^{-1} & 0 & 0\\
			-W_1^{-1} & W_1^{-1}+W_2^{-1}+W_3^{-1} & -W_2^{-1} & -W_3^{-1}\\ 
			0 & -W_2^{-1} & W_2^{-1} & 0 \\ 
			0 & -W_3^{-1} & 0  &W_3^{-1}\\
	\end{array} } \right]  \\
	=&\left[ {\begin{array}{cccccccc}
			1 & 0 & -1 & 0 & 0 & 0 & 0 & 0\\
			0 & 1 & 0 & -1 & 0 & 0 & 0 & 0\\
			-1 & 0 & 2.5 & 0  & -0.5 & 0 & -1 & 0\\
			0 & -1 & 0 & 2.5 & 0 & -1 & 0 & -0.5\\
			0 & 0 & -0.5 & 0 & 0.5 & 0 & 0 & 0\\
			0 & 0 & 0 & -1 & 0 & 1 & 0 & 0\\
			0 & 0 & -1 & 0 & 0 & 0 & 1 & 0 \\
			0 & 0 & 0 & -0.5 & 0 & 0 & 0 & 0.5\\
	\end{array} } \right],  
\end{align*}
\begin{align*}
	\beta =\sum_{i=1}^4 \frac{\hat{\delta_i}^2}{\tau_i}=& W_1^2+W_2^2+W_3^2-(W_1+W_2+W_3)^2=\left[ {\begin{array}{cc}
			-10 & 0\\
			0 & -10\\ 
	\end{array} } \right], ~ \text{and}\\
	{\eta}^{\prime} =& \left[ {\begin{array}{cccccccc}
			-3 & 0 & 11 & 0 & -1 & 0 & -3 & 0\\
			0 & -3 & 0 & 11 & 0 & -3 & 0 & -1\\ 
	\end{array} } \right].
\end{align*}
Therefore,
\begin{align*}
	L(\Hat{\tau}\otimes I_s)L=& \left[ {\begin{array}{cccccccc}
			0 & 0 & 1.5 & 0 & -0.5 & 0 & -1 & 0\\
			0 & 0 & 0 & 1.5 & 0 & -1 & 0 & -0.5\\
			1.5 & 0 & -4& 0  & 1 & 0 & 1.5 & 0\\
			0 & 1.5 & 0 & -4 & 0 & 1.5 & 0 & 1\\
			-0.5 & 0 & 1 & 0 & 0 & 0 & -0.5 & 0\\
			0 & -1 & 0 & 1.5 & 0 & 0 & 0 & -0.5\\
			-1 & 0 & 1.5 & 0 & -0.5 & 0 & 0 & 0 \\
			0 & -0.5 & 0 & 1 & 0 & -0.5 & 0 & 0\\
	\end{array} } \right],~ \text{and}  \\
	\eta {\beta}^{-1} {\eta}^{\prime}=& \left[ {\begin{array}{cccccccc}
			-0.9 & 0 & 3.3 & 0 & -0.3 & 0 & -0.9 & 0\\
			0 & -0.9 & 0 & 3.3 & 0 & -0.9 & 0 & -0.3\\
			3.3 & 0 & -12.1 & 0  & 1.1 & 0 & 3.3 & 0\\
			0 & 3.3 & 0 & -12.1 & 0 & 3.3 & 0 & 1.1\\
			-0.3 & 0 & 1.1 & 0 & -0.1 & 0 & -0.3 & 0\\
			0 & -0.9 & 0 & 3.3 & 0 & -0.9 & 0 & -0.3\\
			-0.9 & 0 & 3.3 & 0 & -0.3 & 0 & -0.9 & 0 \\
			0 & -0.3 & 0 & 1.1 & 0 & -0.3 & 0 & -0.1\\
	\end{array} } \right].
\end{align*}
One can verify that, $$\Delta^{-1}=-\frac{1}{4}L(\Hat{\tau}\otimes I_s)L+\frac{1}{4}\eta {\beta}^{-1} {\eta}^{\prime}.$$

\bibliographystyle{plain}
\bibliography{reference}

\end{document}